\documentclass[a4paper,12pt,reqno]{amsart}

\usepackage[english]{babel}
\usepackage[T1]{fontenc} 
\usepackage[utf8]{inputenc}

\usepackage{listings}
\lstdefinelanguage{code}{
basicstyle=\small\ttfamily,
alsoletter=",
classoffset=1,
keywords={gb, eliminate, saturate, diff, degree, flatten, apply, tensor, product},
keywordstyle={\color{teal}},
classoffset=2,
morekeywords={from, to, list, terms, toList, entries, for, end, if, return},
keywordstyle={\color{blue}},
classoffset=3,
morekeywords={QQ},
keywordstyle={\color{teal}},
classoffset=4,
morekeywords={ideal, matrix, gens},
keywordstyle={\color{teal}},
xleftmargin=1.5cm,
xrightmargin=1em,
columns=fullflexible,
keepspaces=true,
stepnumber=1,
numbers=none,
captionpos=b,
showspaces=false,
frame=none
}

\usepackage{amsfonts} 
\usepackage{amsmath} 
\usepackage{amsthm} 
\usepackage{lmodern}
\usepackage{tikz-cd}

\usepackage[colorlinks]{hyperref}
\hypersetup{
  linkcolor=[RGB]{192,0,0},
  citecolor=[RGB]{0, 120, 0},
  urlcolor=[rgb]{0.6, 0.2, 0.2}
}
\usepackage[alphabetic]{amsrefs} 

\usepackage{amssymb} 

\usepackage[nameinlink]{cleveref}

\usepackage[dotinlabels]{titletoc}

\usepackage{xcolor} 

\usepackage{fix-cm}

\usepackage{bm}

\usepackage{derivative}

\usepackage{fancyhdr}

\usepackage{array}
\usepackage{braket}
\usepackage{scalefnt}
\usepackage{lipsum}

\usepackage{comment}
\usepackage{mathtools}
\usepackage{thmtools}
\usepackage{graphicx}
\usepackage[normalem]{ulem} 
\usepackage{tabularx}

\usepackage{paralist}
\usepackage{tensor}

\usepackage[font=small,labelfont=bf,margin=1.5cm]{caption}
\usepackage{algorithm}

\usepackage{tikz}
\usepackage{tikz-cd}
\usepackage{tikz-3dplot}
\usetikzlibrary{cd}
\usetikzlibrary{arrows}
\usetikzlibrary{matrix}
\usepackage{nicematrix} 
\usetikzlibrary{positioning}

\usepackage{xcolor}
\definecolor{red}{rgb}{1,0,0}
\definecolor{darkred}{RGB}{192,0,0}

\newcommand{\am}{\mathop{\rm am}\nolimits}
\newcommand{\lm}{\mathop{\rm lm}\nolimits}
\newcommand{\gm}{\mathop{\rm gm}\nolimits}
\newcommand{\sm}{\mathop{\rm sm}\nolimits}

\newcommand{\N}{\mathbb{N}}
\newcommand{\C}{\mathbb{C}}

\newcommand{\f}{\varphi}

\newcommand{\p}{\mathbb{P}}

\newcommand{\ot}{\otimes}

\newcommand{\bft}{\mathbf{t}}

\newcommand{\calS}{\mathcal{S}}
\newcommand{\calT}{\mathcal{T}}

\newcommand{\bbC}{\mathbb{C}}

\newcommand{\bbU}{\mathbb{U}}

\newcommand{\bbZ}{\mathbb{Z}}

\newcommand{\id}{\mathrm{id}}

\DeclareMathOperator{\res}{Res}
\DeclareMathOperator{\Res}{Res}

\DeclareMathOperator{\rk}{rk}

\newcommand{\GL}{\mathrm{GL}}

\usepackage[left=1in, right=1in, top=1in, bottom=1in, includefoot, headheight=13.6pt]{geometry}

\usepackage{enumitem}

\usepackage{adjustbox}

\allowdisplaybreaks

\numberwithin{equation}{section}
\theoremstyle{definition}
\newtheorem{defn}[equation]{Definition}
\newtheorem{definition}[equation]{Definition}
\theoremstyle{plain}
\newtheorem{teo}[defn]{Theorem}
\newtheorem{theorem}[defn]{Theorem}

\newtheorem{conjecture}[defn]{Conjecture}

\newtheorem{proposition}[defn]{Proposition}

\newtheorem{lemma}[defn]{Lemma}

\newtheorem{cor}[defn]{Corollary}
\newtheorem{corollary}[defn]{Corollary}
\theoremstyle{remark}

\newtheorem{remark}[defn]{Remark}

\newtheorem{example}[defn]{Example}
\allowdisplaybreaks

\counterwithin{figure}{section}

\usepackage[textwidth=0.8in]{todonotes}
\title{New conjectures on multiplicities of tensor eigenvalues}
\date{\today}

\author{Stefano Canino}
\address[Stefano Canino]{Dipartimento di Matematica, Università degli Studi di Trento, Via Sommarive 14, Povo, Trento, 38123, Italy. ORCID: \texttt{0000-0002-1606-4741}.} 
\email{stefano.canino@unitn.it}

\author{Cosimo Flavi}
\address[Cosimo Flavi]{Instytut Matematyczny Polskiej Akademii Nauk,
ul.~Śniadeckich 8, 00-656 Warsaw, Poland. ORCID: \texttt{0000-0001-5251-4474}.}
\email{cflavi@impan.pl}

\author{Francesco Galuppi}
\address[Francesco Galuppi]{Wydział Matematyki, Informatyki i Mechaniki, Uniwersytet Warszawski, Banacha 2, 02-097 Warsaw, Poland. ORCID: \texttt{0000-0001-5630-5389}.}
\email{galuppi@mimuw.edu.pl}

\author{Yuze Luan}
\address[Yuze Luan]{Instytut Matematyczny Polskiej Akademii Nauk,
ul.~Śniadeckich 8, 00-656 Warsaw, Poland. ORCID: \texttt{0009-0009-7111-0825}.}
\email{yluan@impan.pl}

\keywords{Tensor eigenvalues; eigenvectors; characteristic polynomial; algebraic multiplicity; geometric multiplicity; tensor rank.}

\subjclass{15A69, 15A42, 13P15, 14N07.}

\thanks{The authors are listed in alphabetical order. Canino has been funded by the Italian Ministry of University and Research in the framework of the Call for Proposals for scrolling of final rankings of the PRIN 2022 call -- Protocol no.~2022NBN7TL. Canino and Flavi have been funded by the project \textit{Thematic Research Programmes}, Action I.1.5 of the program \textit{Excellence Initiative -- Research University} (IDUB) of the Polish Ministry of Science and Higher Education. Galuppi acknowledges support by the National Science Center, Poland, under the project ``Tensor rank and its applications to signature tensors of paths'', 2023/51/D/ST1/02363.}

\begin{document}

\maketitle
\begin{abstract}
We work on two conjectures on tensor eigenvalue multiplicities. By using the language of algebraic geometry we give stronger, more refined versions of the conjectures and we prove them in many new cases, notably for all $2\times 2\times\dots\times 2$ tensors. We also establish a connection between the rank of a tensor and the multiplicities of the zero eigenvalue.
\end{abstract}

\section{Introduction}
Matrices are familiar from linear algebra, where they represent linear maps. They are widely studied via their characteristic polynomials, their eigenvalues and their eigenvectors. In mathematics, as well as in physics, engineering and many more applied sciences, higher degree maps are represented by tensors
, that is, 
elements of $\C^n\ot((\C^n)^*)^{\ot d}$
. A tensor
\begin{equation}
    \label{tensor decomposition}
T=\sum_{i=1}^r v_i\ot (\alpha_{i,1}\ot\dots\ot \alpha_{i,d})\in \C^n\ot((\C^n)^*)^{\ot d}
\end{equation} defines a linear map $(\C^n)^{\ot d}\to \C^n$ by
\[w_1\ot\dots\ot w_d\mapsto\sum_{i=1}^r \alpha_{i,1}(w_1)\cdots  \alpha_{i,d}(w_d)v_i.\]
Hence it induces an algebraic map $T:\C^n\to \C^n$ by $T(w)=T(w\ot\dots\ot w)$. If we choose a basis for $\C^n$, then the algebraic map $T$ is defined by $n$ homogeneous polynomials of degree $d$. Given how important eigenvectors and eigenvalues are for matrices, several generalizations have been proposed for tensors.
We are interested in the notion introduced in \cite{lim} and \cite{QI2005}. If $T,\bft\in\C^n\ot((\C^n)^*)^{\ot d}$, then a vector $w\in\C^n\setminus\{0\}$ is a \emph{$\bft$-eigenvector} of $T$ if there exists $\lambda_0\in\C$ such that $T(w)=\lambda_0\bft(w)$. In this case $\lambda_0$ is a \emph{$\bft$-eigenvalue} of $T$. The importance of tensor eigenvalues in mathematics and in science cannot be overestimated. Here we just point to the textbook reference \cite{QCC}.

In this paper we are concerned about multiplicities of tensor eigenvalues. While matrices have eigenspaces, the set $E_{\bft,T}(\lambda_0)$ of all $\bft$-eigenvectors of $T$ with $\bft$-eigenvalue $\lambda_0$ is not a linear space. However, it naturally comes with the structure of algebraic scheme - see \Cref{definition: multiplicity eigenscheme} - whose dimension $\gm_{\bft,T}(\lambda_0)$ is called the $\bft$-\emph{geometric multiplicity} of $\lambda_0$. On the other hand, the dimension of the linear space spanned by all the $\bft$-eigenvectors is called the $\bft$-\emph{span multiplicity} of $\lambda_0$ and denoted by $\sm_{\bft,T}(\lambda_0)$.

Just like matrices, tensors have a characteristic polynomial, as we recall in \Cref{section: characteristic polynomial}. The $\bft$-\emph{algebraic multiplicity} of $\lambda_0$ is its multiplicity $\am_{\bft,T}(\lambda_0)$ as a root of the $\bft$-characteristic polynomial of $T$. For matrices the span multiplicity is the same as the geometric multiplicity, and the latter cannot exceed the algebraic multiplicity. For tensors the relation among these notions of multiplicities is not fully understood yet.
 Here we recall \cite{QI2005}*{Conjecture 1} and \cite{multiplicities}*{Conjecture 1.1}. We take $T,\bft\in\C^n\ot((\C^n)^*)^{\ot d}$ and we make the technical assumption that $\bft$ is \emph{non-singular} - see \Cref{remark: an eigenvector has only one eigenvalue}.

\begin{conjecture}\label{conjecture: am vs sm} If $\lambda_0\in\C$
, then $\am_{\bft,T}(\lambda_0)\ge\sm_{\bft,T}(\lambda_0)$.
\end{conjecture} 

\begin{conjecture}\label{conjecture: algebraic vs geometric multiplicity}
Let $\lambda_0\in\C$
and let $E_1,\dots,E_s$ the irreducible components of $E_{\bft,T}(\lambda_0)$. Then
\begin{enumerate}[leftmargin=*]
\item\label{item: conjecture am gm strong} $\am_{\bft,T}(\lambda_0)\ge\sum_{i=1}^s \dim(E_i)d^{\dim(E_i)-1}$.
\item\label{item: conjecture am gm weak} $\am_{\bft,T}(\lambda_0)\ge \gm_{\bft,T}(\lambda_0)d^{\gm_{\bft,T}(\lambda_0)-1}$.
\end{enumerate}
\end{conjecture}
By definition there exists some $i\in\{1,\dots,s\}$ such that $\dim(E_i)=\gm_{\bft,T}(\lambda_0)$, therefore the second statement of \Cref{conjecture: algebraic vs geometric multiplicity} is a weaker version of the first one. In \cite{hypergraph} and \cite{ZHENG2024113780} \Cref{conjecture: algebraic vs geometric multiplicity} has been proven for some special families of tensors coming from hypergraphs, but we are far from a complete solution.

The purpose of this article is threefold. First we use the language of algebraic geometry to give stronger, more refined versions
of the two conjectures
, that take into account the scheme structure of $E_{\bft,T}(\lambda_0)$. In \Cref{section: resultants} we will illustrate how \Cref{conjecture: our version of am vs sm} and \Cref{conjecture: our version of am vs gm} imply \Cref{conjecture: am vs sm} and \Cref{conjecture: algebraic vs geometric multiplicity}, respectively.

\begin{conjecture}\label{conjecture: our version of am vs sm}
If $\lambda_0\in\C$, then $\am_{\bft,T}(\lambda_0)$ is at least the scheme-theoretic dimension of the linear span of $E_{\bft,T}(\lambda_0)$.
\end{conjecture}

\begin{conjecture}\label{conjecture: our version of am vs gm}
Let
$\lambda_0\in\C$
and call $E_1,\dots,E_s$ the irreducible components of $E_{\bft,T}(\lambda_0)$. Then
\[\am_{\bft,T}(\lambda_0)\ge\sum_{i=1}^s \deg(E_i)\dim(E_i)d^{\dim(E_i)-1}.\]
\end{conjecture}

Our second purpose is to provide new evidence for these conjectures. Here we collect some of our results from \Cref{section: geometric multiplicities n-1} and \Cref{section: multiplicities n=3}.
\begin{theorem}\label{theorem: summary contribution multiplicities}
Let
$\lambda_0\in\C$ be a $\bft$-eigenvalue of $T$.
\begin{enumerate}
\item If $\gm_{\bft,T}(\lambda_0)=n$, then  \Cref{conjecture: our version of am vs gm} holds.
\item If $\gm_{\bft,T}(\lambda_0)=n-1$, then  \Cref{conjecture: algebraic vs geometric multiplicity}\eqref{item: conjecture am gm weak} holds.
\item If every irreducible component of $E_{\bft,T}(\lambda_0)$ has dimension $n-1$, then \Cref{conjecture: our version of am vs gm} holds.
\item If $n=2$, then \Cref{conjecture: our version of am vs gm} holds.
\item If $n=3$, $\gm_{\bft,T}(\lambda_0)=1$ and the ideal from \Cref{definition: multiplicity eigenscheme} is saturated, then $\am_{\bft,T}(\lambda_0)\ge \frac{d^2}{2}+1$. In particular  \Cref{conjecture: algebraic vs geometric multiplicity}\eqref{item: conjecture am gm weak} holds.
\end{enumerate}
In all the cases above \Cref{conjecture: our version of am vs sm} holds as well.
\end{theorem}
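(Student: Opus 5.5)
The plan is to write the $\bft$-characteristic polynomial as a resultant, $\chi(\lambda)=\Res\bigl(T-\lambda\bft\bigr)$, the resultant of the $n$ forms of degree $d$ given by the components of $T(x)-\lambda\bft(x)$. The algebraic multiplicity $\am_{\bft,T}(\lambda_0)$ is then the order of vanishing at $\lambda=\lambda_0$ of the resultant along the pencil $F_\lambda=T-\lambda_0\bft-(\lambda-\lambda_0)\bft$. Throughout, non-singularity of $\bft$ guarantees that $\bft$ has no common zeros, so $\Res(\bft)\neq0$ and the pencil is generic at infinity. I will work in $\p^{n-1}$ with $E=E_{\bft,T}(\lambda_0)=V(T-\lambda_0\bft)$ as a scheme. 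The basic tool is a lower bound for the order of vanishing of the resultant along a line through a point $F=T-\lambda_0\bft$ of the resultant hypersurface, in terms of the common zero scheme of $F$.

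\textbf{Dimension $n$ and $n-1$ components.} In case (1) $F\equiv0$, so $\chi(\lambda)=(\lambda_0-\lambda)^{nd^{n-1}}\Res(\bft)$. Hence $\am=nd^{n-1}=\deg(\p^{n-1})\,n\,d^{n-1}$ and \Cref{conjecture: our version of am vs gm} holds with equality.

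In case (3), all components are hypersurfaces, so the $F_i$ share a gcd $g$ of degree $e=\sum\deg(E_i)$ (counted appropriately). Write $F_i=gG_i$. I then expect the factorization $\Res(F-\mu\bft)$ to degenerate to first order like the product formula $\Res(gG)=\Res(g\text{-part})\cdot\Res(G)$ along the pencil. My plan is to compute the valuation in $\mu$ directly: substituting $F-\mu\bft$ and using multiplicativity of resultants in one argument, each factor of $g$ should contribute $\dim\cdot d^{\dim-1}=(n-1)d^{n-2}$ times its degree. Concretely, I would restrict to a generic line $L\subset\p^{n-1}$, use the Poisson formula expressing $\Res$ as $\Res|_{H}\cdot\prod F_n(\text{roots})$, and induct on $n$.

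\textbf{Cases (2) and (4).} Case (4) reduces to case (3) when $\gm=1$ and to case (1) when $\gm=2$. The case $\gm=0$ is the classical fact that the order of vanishing of the resultant is at least the length of the zero-scheme, i.e.\ at least $\deg E$.

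Case (2) is the hardest. Here there are components of dimension $n-1$ together with lower-dimensional components, and I only need the weak inequality. My approach is to extract the gcd $g$ as above, bound the contribution of $V(g)$ by $(n-1)d^{n-2}$ as in (3), and show that the residual factor contributes a nonnegative order. The main obstacle is making the factorization along the pencil rigorous, since $F-\mu\bft$ is not divisible by $g$. My first attempt is to perform a lower-semicontinuity argument: degenerate $\bft$ within non-singular tensors to one of the form $\bft'=g\cdot\bft''+\ldots$, noting that the valuation can only increase under specialization in the right direction.

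\textbf{Case (5).} Here $E$ is a curve in $\p^2$ and the ideal is saturated. I plan a Hilbert function computation. The valuation of the resultant is at least the length of $\C[x]/(F)$ in a degree $\ge$ the regularity, twisted by the pencil. Comparing the Hilbert series of a complete intersection of three forms of degree $d$ with that of the saturated ideal of a curve gives a lower bound quadratic in $d$, which I expect to produce $\frac{d^2}{2}+1\ge d$.

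\textbf{\Cref{conjecture: our version of am vs sm}.} The span of $E$ has dimension at most $n$, and in each case the obtained bound dominates the scheme-theoretic dimension of the span. In case (1) this is immediate. In the other cases the degree-times-$d^{\dim-1}$ bounds exceed the number of independent points needed, using $d\ge2$.
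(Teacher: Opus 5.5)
\textbf{Verdict: there is a genuine gap.} Your case (1) is correct. For cases (2)--(5), however, the proposal only announces what you expect to happen; the inequality that carries all the content is never proved. Note also that your ``basic tool'', a lower bound for the order of vanishing of the resultant along a pencil in terms of the zero scheme of $T-\lambda_0\bft$, is essentially what \Cref{conjecture: our version of am vs gm} asserts, so it cannot be taken as given.

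\textbf{Case (3).} Write $T-\lambda_0\bft=(HM_1,\dots,HM_n)$. The obstruction is the one you flag yourself in case (2): the entries $HM_i+\mu G_i$ are not divisible by $H$. So multiplicativity of the resultant gives nothing, and there is no factor-by-factor decomposition to exploit. Indeed, for $n=3$ and general $H,M_i$ the order is $A_{d,b,3,3}=3bd-b^2$, which is not even linear in $b=\deg H$. The Poisson formula does not help directly either: at $\mu=0$ the zero locus of the first $n-1$ entries contains $V(H)$ and is not finite, and tracking how the $d^{n-1}$ roots collapse onto $V(H)$ is exactly the missing computation.

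The idea you are missing is the elimination step of \Cref{lemma: computations with resultant}:
\begin{itemize}
\item multiply the first entry by $G_2$;
\item replace $G_2(HM_1+\mu G_1)$ by $G_2(HM_1+\mu G_1)-G_1(HM_2+\mu G_2)=H(M_1G_2-M_2G_1)$, which does factor;
\item extract $\res(H,\mu G_2,F_3,\dots,F_n)=\mu^{\deg(H)\deg(F_3)\cdots\deg(F_n)}\res(H,G_2,F_3,\dots,F_n)$.
\end{itemize}
Iterating this (\Cref{proposition: induction step algebraic multiplicity}) gives the exact order $A_{d,b,n,n}$ for general $H,M_i$. Fix $b$ and $\bft$; on the irreducible image of $\Psi_{n,d,\lambda_0}$, the condition $(\lambda-\lambda_0)^q\mid\varphi_{\bft,T}$ is closed. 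Hence the bound holds for every $T$ (\Cref{lemma: from general to all of them}). Finally, $A_{d,b,n,n}\ge(n-1)bd^{n-2}$ (\Cref{lemma: Abnn is enough to beat geometric mult}).

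\textbf{Case (2).} This is not harder than (3). The bound above holds for every $T$ whose codimension-one part is $V(H)$ with $\deg H=b$, whatever the lower-dimensional components are, so there is no residual factor to control. Your proposed fix would fail anyway. The tensor $\bft$ is fixed data, and semicontinuity only transfers lower bounds from the general member of a family to its specializations, not back. So degenerating $\bft$ cannot yield a bound at the given $\bft$; you must vary $T$ with $\bft$ fixed. Moreover, a $\bft'$ divisible by $g$ would be singular.

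\textbf{Case (4).} This inherits the gap of case (3). Your ``$\gm=0$'' case is vacuous, since then $\lambda_0$ is not an eigenvalue. For $n=2$ you could close the gap elementarily: the Sylvester matrix of $T-\lambda\bft$ is affine-linear in $\lambda$ and has corank $\deg\gcd=b$ at $\lambda_0$, so $\am_{\bft,T}(\lambda_0)\ge b$.

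\textbf{Case (5).} You misread the geometry. $\gm_{\bft,T}(\lambda_0)=1$ means $E_{\bft,T}(\lambda_0)$ is a finite union of lines through the origin, i.e.\ a finite subscheme of $\p^2$, not a curve. The Hilbert-function comparison is also not carried out. Even granting ``order of vanishing $\ge$ length in high degree'', you only get $\am\ge\deg E$, and you would still need Hilbert--Burch to show that $\deg E$ is quadratic in $d$; it equals $d^2-ad+a^2$, with $a$ the degree of one row of the matrix. The paper instead uses the determinantal form inside the resultant:
\begin{itemize}
\item multiply the first entry by $A_3$;
\item use the syzygy $A_3m_{12}-A_2m_{13}+A_1m_{23}=0$ among the $2\times 2$ minors to pull out $\mu^{d^2}$;
\item apply \Cref{lemma: computations with resultant} to the denominator, obtaining $\am=d^2-a(d-a)$ for general $T\in\calT_\bft(\lambda_0)$;
\item specialize to all $T$.
\end{itemize}
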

Beside the statement in itself, we want to stress that our proof of  \Cref{theorem: summary contribution multiplicities} employs tools from algebraic geometry. We hope that these techniques, like specialization techniques or the Hilbert-Burch theorem, will be useful additions to the toolbox for the community interested in tensor spectral theory.

Our third contribution is to relate the multiplicity of the eigenvalues of $T$ to the \emph{rank} of $T$, that is the minimal $r$ such that a decomposition like \eqref{tensor decomposition} exists. The rank is arguably the most important invariant of a tensor: it generalizes matrix rank, it measures the complexity of $T$ and, as summarized in \cite{Lan}*{Section 1.3}, it has a wide range of applications. We study the connection between the rank and the multiplicities of the zero eigenvalues. As a byproduct, we show that our conjectures hold for small rank tensors.
\begin{theorem}\label{theorem: summary for low rank}
If $r=\rk(T)\in\{0,\dots,n\}$, then
\begin{equation}\label{equation: am gm low rank}
\gm_{\bft,T}(0)\ge n-r\mbox{ and }\am_{\bft,T}(0)\ge (n-r)d^{n-1},
\end{equation}
hence \Cref{conjecture: our version of am vs sm} holds.
Moreover, if $T$ is general among rank $r$ tensors then \eqref{equation: am gm low rank} are two equalities and so \Cref{conjecture: algebraic vs geometric multiplicity}\eqref{item: conjecture am gm weak} holds as well.
\end{theorem}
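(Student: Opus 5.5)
Write $T=\sum_{i=1}^r v_i\ot(\alpha_{i,1}\ot\dots\ot\alpha_{i,d})$ with $r\le n$. The plan is to locate a large linear piece inside the zero eigenscheme and then bound the order of vanishing of the characteristic polynomial at $\lambda=0$ from below by a resultant computation. For $\lambda_0=0$ the eigenscheme $E_{\bft,T}(0)$ is cut out in $\p^{n-1}$ by the $2\times 2$ minors of the matrix $(T(w)\,|\,\bft(w))$. Since $\bft$ is non-singular, $\bft(w)\neq 0$ for every $w\neq 0$. Hence set-theoretically $E_{\bft,T}(0)=\{T(w)=0\}$.

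\textbf{Step 1: the geometric multiplicity.} Consider the linear space $L=\bigcap_{i=1}^r\ker\alpha_{i,1}$, which has dimension at least $n-r$. Every $w\in L$ satisfies $T(w)=\sum_i\alpha_{i,1}(w)\cdots\alpha_{i,d}(w)v_i=0$. So $\p(L)\subseteq E_{\bft,T}(0)$, which gives $\gm_{\bft,T}(0)\ge n-r-1$ projectively. With the paper's affine convention this is $\gm_{\bft,T}(0)\ge n-r$. This bound on the dimension of $L$ also yields \Cref{conjecture: am vs sm} and the span version \Cref{conjecture: our version of am vs sm}, once the algebraic bound below is established, because $(n-r)d^{n-1}\ge n-r$.

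\textbf{Step 2: the algebraic multiplicity, the main step.} The characteristic polynomial is $\chi(\lambda)=\Res(T-\lambda\bft)$, where $\Res$ is the resultant of $n$ forms of degree $d$. The resultant is homogeneous of degree $d^{n-1}$ in the coefficients of each of the $n$ forms. Choose a basis of the target $\C^n$ in which $v_1,\dots,v_r$ lie in the span of $e_1,\dots,e_r$. This is possible after completing a basis of $\langle v_1,\dots,v_r\rangle$, and a change of basis in the target only multiplies the resultant by a nonzero constant. Then the components $r+1,\dots,n$ of $T(w)$ vanish identically, so the last $n-r$ coordinate forms of $T-\lambda\bft$ are $-\lambda\bft_j$. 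Multihomogeneity pulls out $\lambda^{d^{n-1}}$ from each of these $n-r$ forms. Hence $\lambda^{(n-r)d^{n-1}}$ divides $\chi$, which gives $\am_{\bft,T}(0)\ge(n-r)d^{n-1}$. I expect this step to be the key one. The bound comes cleanly from multihomogeneity, with no intersection theory needed.

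\textbf{Step 3: equality for general $T$.} By upper semicontinuity it suffices to exhibit a single rank-$r$ tensor where both bounds are equalities. Take $T=\sum_{i=1}^r e_i\ot(e_i^*)^{\ot d}$, so that $T(w)=(w_1^d,\dots,w_r^d,0,\dots,0)$, and take $\bft$ to be the diagonal tensor with $\bft(w)=(w_1^d,\dots,w_n^d)$. Then $\chi(\lambda)=\Res\bigl((1-\lambda)w_1^d,\dots,(1-\lambda)w_r^d,-\lambda w_{r+1}^d,\dots,-\lambda w_n^d\bigr)$. Since the resultant of the pure powers $w_1^d,\dots,w_n^d$ is $\pm1$, this equals $\pm(1-\lambda)^{rd^{n-1}}\lambda^{(n-r)d^{n-1}}$, so the multiplicity is exactly $(n-r)d^{n-1}$. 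The zero locus $\{w_1=\dots=w_r=0\}$ has dimension $n-r$ affinely, giving equality in the geometric bound.

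\textbf{Step 4: transfer to general $\bft$.} For general $T$ and arbitrary non-singular $\bft$, I expect to argue that the quotient $\chi/\lambda^{(n-r)d^{n-1}}$ does not vanish at $0$ on an open set. This is the subtle point, because the relevant open set may depend on $\bft$. To handle it I would use irreducibility of the rank-$\le r$ locus, together with the explicit factorization of the previous step, which can be achieved after a $\GL_n$ change of coordinates. Once both equalities hold, $\am_{\bft,T}(0)=\gm_{\bft,T}(0)\,d^{n-1}\ge\gm_{\bft,T}(0)\,d^{\gm_{\bft,T}(0)-1}$, which is \Cref{conjecture: algebraic vs geometric multiplicity}\eqref{item: conjecture am gm weak}.
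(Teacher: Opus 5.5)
Your Step 2 is exactly the paper's argument: a change of basis in the target makes $T=(F_1,\dots,F_r,0,\dots,0)$, and homogeneity pulls $\lambda^{d^{n-1}}$ out of each of the last $n-r$ entries. Your linear space $L=\bigcap_i\ker\alpha_{i,1}$ is a valid alternative to the paper's observation that $E_{\bft,T}(0)=V(F_1,\dots,F_r)$ has dimension at least $n-r$. Your opening sentence is off, though: the $2\times 2$ minors of $(T(w)\,|\,\bft(w))$ cut out the eigenvectors of \emph{all} eigenvalues. By definition $E_{\bft,T}(0)=V(F_1,\dots,F_n)$, which is what you then use.

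The first genuine gap is that your deduction of \Cref{conjecture: am vs sm} and \Cref{conjecture: our version of am vs sm} runs in the wrong direction. These ask for $\am_{\bft,T}(0)\ge\sm_{\bft,T}(0)$, resp.\ $\am_{\bft,T}(0)\ge\lm_{\bft,T}(0)$, so you need an \emph{upper} bound on the span. But $\dim L\ge n-r$ is a lower bound. Knowing that both $\am_{\bft,T}(0)$ and the span are at least $n-r$ says nothing about how they compare, and the span can exceed $n-r$. For example, $T=(2x_1x_3,0,x_1x_2)$ in \Cref{example: the eigenscheme is not equidimensional} has rank $2$, so $n-r=1$, yet $E_{\bft,T}(0)=V(x_1)\cup V(x_2,x_3)$ lies in no hyperplane. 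The only bound available in general is $\lm_{\bft,T}(0)\le n$, so you need $\am_{\bft,T}(0)\ge n$. The paper gets this from $(n-r)d^{n-1}\ge d^{n-1}\ge 2^{n-1}\ge n$ when $d\ge2$ and $r\le n-1$, and handles $d=1$ by matrix theory. For $r=n$ the bound $(n-r)d^{n-1}$ is $0$ and says nothing about the span; the paper's own deduction also tacitly needs $n-r\ge1$.

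The second gap is in Steps 3--4. Your witness fixes $\bft$ to be diagonal, but in the statement $\bft$ is an arbitrary fixed non-singular tensor and only $T$ is general. Semicontinuity in $T$ therefore needs a rank-$r$ witness for that same $\bft$. For $\gm_{\bft,T}(0)$ your witness is fine, since $E_{\bft,T}(0)$ does not depend on $\bft$. For $\am_{\bft,T}(0)$, Step 4 does not supply one:
\begin{itemize}
\item a change of coordinates cannot in general bring $\bft$ to diagonal form (a dimension count already rules this out for $n,d\ge3$);
\item irreducibility of the space of pairs $(\bft,T)$ only gives equality for general $\bft$.
\end{itemize}
The missing step follows from your own Step 2. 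Up to a nonzero constant, $\chi(\lambda)=\lambda^{(n-r)d^{n-1}}\Res(F_1-\lambda G_1,\dots,F_r-\lambda G_r,G_{r+1},\dots,G_n)$. So equality holds if and only if $\Res(F_1,\dots,F_r,G_{r+1},\dots,G_n)\neq0$, which is the criterion the paper uses. To see that this open condition is nonempty for the given $\bft=(G_1,\dots,G_n)$, take $T(w)=(\ell_1(w)^d,\dots,\ell_r(w)^d,0,\dots,0)$ with $\ell_1,\dots,\ell_r$ general linear forms; this tensor has rank $r$. The cone $V(G_{r+1},\dots,G_n)$ has dimension at most $r$, because cutting it further by $G_1,\dots,G_r$ leaves only the origin. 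Hence the general $(n-r)$-dimensional linear space $V(\ell_1,\dots,\ell_r)$ meets it only at $0$.
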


We choose to work over the field $\bbC$ 
because it is useful in order to apply some existing results. For example, in \Cref{section: resultants} we make use of the computation of the degree of the characteristic polynomial from \cite{FO14}
, which may fail if the field has positive characteristic
. In \Cref{remark: eigenscheme codim 1 principal ideal} we apply the characterization of schemes defined by principal ideals \cite{Hart}*{Proposition I.1.13} and the Chevalley theorem \cite{Hart}*{Exercise II.3.19} which requires an algebraically closed field. Moreover the Hilbert-Burch theorem in \Cref{proposition: n=3 hilbert burch} only works over $\bbC$. On the other hand, many computations apply more broadly without assumptions on the underlying field, for example, the use of properties of resultants in \cite{Jou91} in the proof of \Cref{lemma: computations with resultant}.

In our research we found it useful to work out some specific examples via a software computation. We used the package \cite{Sta18} for Macaulay2.

\section{Resultants and characteristic polynomials of tensors}
\label{resultant}\label{section: resultants}\label{section: characteristic polynomial}
In this section we recall the notion of resultant and the definition of characteristic polynomial of a tensor. The resultant is a tool to determine whether $n$ given homogeneous polynomials $F_1,\dots,F_n$ in $n$ variables share a non-trivial root. More precisely the resultant is a polynomial in the coefficients of $F_1,\dots,F_n$ which vanishes if and only if $F_1,\dots,F_n$ have a non-zero common root. For example, $n$ homogeneous linear forms have a non-zero common root in $\C^n$ if and only if they are linearly dependent: in this case the resultant is the determinant of the $n\times n$ matrix of the coefficients of $F_1,\dots,F_n$. Another example is determining if the algebraic variety defined by a homogeneous polynomial $F$ has a singular point. This happens if and only if the partial derivatives of $F$ share a non-zero root. In this case the resultant of the derivatives of $F$ is called the discriminant of $F$. We refer to \cite{GKZ}*{Chapter 13}, \cite{Jou91}*{Proposition 2.3} and \cite{CLO05}*{Chapter 3} for the following definitions and results. 

Let $A$ be a commutative
ring and let $n$ be a positive integer. If $\alpha=(\alpha_1,\dots,\alpha_n)\in\N^n$ is a multi-index, then we denote by $x^\alpha$ the monomial $x_1^{\alpha_1}\cdots x_n^{\alpha_n}$. Given positive integers  $d_1,\dots,d_n$ we consider the \emph{universal ring of coefficients}
$$\bbU_{d_1,\dots,d_n}=  \bbZ[u_{i,\alpha}\mid i\in\{1,\dots,n\}\mbox{ and }
\alpha_1+\dots+\alpha_n=d_i].$$
Given $P\in\bbU_{d_1,\dots,d_n}$ and $n$ homogeneous polynomials
$$F_i=\sum_{
\alpha_1+\dots+\alpha_n=d_i}c_{i,\alpha}x^{\alpha},\dots,F_n=\sum_{
\alpha_1+\dots+\alpha_n=d_i}c_{n,\alpha}x^{\alpha}\in A[x_1,\dots,x_n],
$$ we denote by $P(F_1,\dots,F_n)$ the element of $A$ obtained by replacing each variable $u_{i,\alpha}$ with the corresponding coefficient $c_{i,\alpha}$. In this paper we will make use of $A=\C$ or $A=\C[\lambda]$.

\begin{teo}\label{teo:resultant}
If $d_1,\dots,d_n$ are positive integers, then there exists a unique polynomial $\Res_{d_1,\dots,d_n}\in\bbU_{d_1,\dots,d_n}$ satisfying the following properties:
\begin{enumerate}[leftmargin=*]
\item if $F_1,\dots,F_n\in A[x_1,\dots,x_n]$ are homogeneous of degrees $d_1,\dots,d_n$, then the system 
    $$F_1(x_1,\dots,x_n)=\dots=F_n(x_1,\dots,x_n)=0$$
    has a non-zero solution over $A$ if and only if $\Res(F_1,\dots,F_n)=0$;
    \item $\Res_{d_1,\dots,d_n}(x_1^{d_1},\dots,x_n^{d_n})=1$;
    \item $\Res_{d_1,\dots,d_n}$ is irreducible
    as a polynomial in $A[u_{i,\alpha}]$.
\end{enumerate}
\end{teo}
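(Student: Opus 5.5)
The plan is the classical construction via elimination theory, following \cite{Jou91} and \cite{CLO05}*{Chapter 3}. Work first over the universal ring $\bbU=\bbU_{d_1,\dots,d_n}$ with the generic polynomials $\mathcal F_i=\sum_\alpha u_{i,\alpha}x^\alpha$. Let $\mathfrak m=(x_1,\dots,x_n)$ and consider the ideal $I=(\mathcal F_1,\dots,\mathcal F_n)\subseteq\bbU[x_1,\dots,x_n]$. Define the \emph{inertia form ideal} $\mathfrak A=\{P\in\bbU : \mathfrak m^k P\subseteq I \text{ for some } k\}=(I:\mathfrak m^\infty)\cap\bbU$. I will show that $\mathfrak A$ is a principal prime ideal, and that a suitably normalized generator is $\Res_{d_1,\dots,d_n}$.

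The key steps, in order, are the following.

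First, primality. Consider the incidence variety $Z\subseteq \mathbb A^N_{\bbZ}\times\p^{n-1}$ cut out by $\mathcal F_1=\dots=\mathcal F_n=0$. Over each point of $\p^{n-1}$, the fibre is a linear subspace of the coefficient space of codimension $n$, since each $\mathcal F_i(x)=0$ is one linear condition on the $u_{i,\alpha}$. Hence $Z$ is a vector bundle over $\p^{n-1}$, irreducible and reduced, of dimension $N-1$. Since $\mathfrak A$ is the ideal of the projection of $Z$ (which is closed because $\p^{n-1}$ is proper), $\mathfrak A$ is prime.

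Second, principality. The projection of $Z$ is generically finite. To see this, take the coefficients of $x_1^{d_1}-\cdots$ arranged so that there is exactly one common root; equivalently, one can solve for the coefficients $u_{i,(0,\dots,d_i,\dots,0)}$ of the pure powers. So its image is a hypersurface, and a height-one prime in the UFD $\bbU$ is principal, say $\mathfrak A=(R)$. Then $R$ is irreducible, which gives property (3) over $\bbZ$, with $\bbZ$ replaced by any $A$ after base change. Over an algebraically closed field, irreducibility follows from irreducibility of $Z$.

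Third, normalization and uniqueness. One checks that $R(x_1^{d_1},\dots,x_n^{d_n})=\pm1$. The argument is that $R$ has degree $\prod_{j\ne i}d_j$ in the coefficients of $\mathcal F_i$, computed by intersecting with a generic line in the coefficient space. The coefficient of the monomial $\prod_i u_{i,d_ie_i}^{\prod_{j\neq i}d_j}$ is found by a specialization argument or via the Macaulay matrix determinant formula. Replacing $R$ by $-R$ if needed gives property (2). Uniqueness then holds: any polynomial satisfying (1) and (3) vanishes exactly on the image of $Z$, so it generates $\mathfrak A$ up to a unit $\pm1$ of $\bbU$, and (2) fixes the sign.

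Fourth, property (1) for arbitrary $A$. This follows by specialization $u_{i,\alpha}\mapsto c_{i,\alpha}$. If a nonzero common root exists, then $R$ vanishes, since $\mathfrak m^kR\subseteq I$. Conversely, if $R(F)=0$, then the specialized point lies in the projection of $Z$, so there is a common zero in $\p^{n-1}$. This is rigorous when $A$ is an algebraically closed field, which is the case relevant to the paper. For $A=\C[\lambda]$, "solution over $A$" should be read after passing to the algebraic closure of the fraction field.

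The main obstacle I expect is the normalization step: showing that the coefficient is exactly $\pm1$ rather than some other integer, which is needed for irreducibility over $\bbZ$ and for property (2). I would try first Macaulay's formula, which writes the resultant as a ratio of two determinants. In that formula the monomial $x_i^{d_i}$ gives identity blocks, so the value is $1$.
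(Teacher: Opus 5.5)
The paper gives no proof of this theorem; it quotes it from \cite{GKZ}, \cite{Jou91} and \cite{CLO05}. Your outline (inertia forms, the incidence variety $Z$ as a vector bundle over $\p^{n-1}$, a height-one prime of the UFD $\bbU$, specialization) is the classical argument of those sources.

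Primality, principality and property (1) over an algebraically closed field work as you sketch them. For generic finiteness, $(x_1^{d_1},\dots,x_{n-1}^{d_{n-1}},x_1^{d_n})$ is a concrete system whose only common zero is $[0:\dots:0:1]$. Your caveat on (1) is necessary: over $\C[\lambda]$ the system $F_1=F_2=x_1^2-\lambda x_2^2$ has resultant $0$ but no non-zero solution in $\C[\lambda]^2$.

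The genuine gap is property (3) beyond $\bbZ$.
\begin{enumerate}
\item Irreducibility does not survive base change, so ``replace $\bbZ$ by any $A$'' proves nothing. In fact (3) is false for non-domains: in $(\C\times\C)[u]$ one has $R=(R,1)\cdot(1,R)$.
\item Over an algebraically closed field $k$, irreducibility of $Z_k$ only shows that the zero set of $R$ is irreducible. That means $R=cP^m$ in $k[u]$ with $P$ irreducible, and nothing in your sketch rules out $m\ge 2$.
\item For the same reason, computing the degree of $R$ ``by intersecting with a generic line'' is circular. A line sees the reduced hypersurface $\pi(Z_k)$, whose degree equals that of $R$ only once $R$ is known to be squarefree over $k$.
\end{enumerate}

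Over $\C$, which is all the paper uses, flatness closes the gap. The map $\bbZ\to\C$ is flat, and saturation with respect to the finitely generated ideal $\mathfrak m$ commutes with flat base change. Hence $\mathfrak A\otimes\C=R\,\C[u]$ is the inertia ideal of the generic system over $\C$. This ideal is prime because $Z_\C$ is integral, so $R$ is irreducible in $\C[u]$. In positive characteristic you need one more step, for instance comparing the degree $\prod_{j\neq i}d_j$ of $R$ in the coefficients of $\mathcal F_i$ (known from characteristic $0$) with that of the reduced hypersurface $\pi(Z_k)$. Irreducibility over $\overline{\mathrm{Frac}(A)}$ then gives irreducibility in $A[u]$ for any domain $A$, because the pure-power monomial of $R$ has coefficient $\pm1$.

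On the other hand, the step you flag as the main obstacle is easy and does not need Macaulay's ratio formula, whose standard proofs presuppose the resultant.
\begin{enumerate}
\item For $\delta=\sum_i(d_i-1)+1$ the Macaulay matrix $M_\delta$ is square, and Cramer's rule gives $\det(M_\delta)\,\mathfrak m^\delta\subseteq I$. Hence $\det(M_\delta)\in\mathfrak A=(R)$.
\item Every monomial of degree $\delta$ is divisible by some $x_i^{d_i}$, so $M_\delta$ specializes to the identity at $(x_1^{d_1},\dots,x_n^{d_n})$.
\item Therefore the integer $R(x_1^{d_1},\dots,x_n^{d_n})$ divides $1$.
\end{enumerate}
This also shows $\mathfrak A\neq 0$ and makes the degree computation unnecessary for (2).
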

When the degrees are clear from the context, we write $\Res$ instead of $\Res_{d_1,\dots,d_n}$.
\begin{definition}\label{def:resultant}
Let $A$ be a commutative ring. Given $F_1,\dots,F_n\in A[x_1,\dots,x_n]$ homogeneous polynomials, the \textit{resultant} of $F_1,\dots,F_n$ is $\Res(F_1,\dots,F_n)\in A$.
\end{definition}

The next lemma collects a few properties of the resultant from \cite{Jou91}*{Proposition 2.3, Proposition 5.9} and \cite{GKZ}*{Chapter 13, Proposition 1.3}.

\begin{lemma}\label{lemma: multilinearity of resultant}\label{lemma: pull out constants from the resultant}
Let $A$ be a commutative ring and let $F_1,\dots,F_n\in A[x_1,\dots,x_n]$ be homogeneous polynomials.
\begin{enumerate}[leftmargin=*]
\item $\Res(F_1,\dots,F_n)$ is homogeneous in the entries of $F_1,\dots,F_n$.  In particular $$\Res(F_1,\dots,F_{i-1},\lambda F_i,F_{i+1},\dots,F_n)=\lambda^{d_1\cdots d_{i-1}d_{i+1}\cdots  d_n}\Res(F_1,\dots,F_n).$$ 
\item If $H\in  A[x_1,\dots,x_n]$ is homogeneous, then $\res(F_1,\dots,F_{j-1},F_j H,F_{j+1}\dots,F_n)$ equals
$$\res(F_1,\dots,F_{j-1},F_j,F_{j+1}\dots,F_n)\cdot\res(F_1,\dots,F_{j-1}, H,F_{j+1}\dots,F_n).$$
\item Let $j\in\{1,\dots,n\}$ and let $\{H_i\mid i\neq j\}\subseteq A[x_1,\dots,x_n]$ be a set of $n-1$ homogeneous polynomials. If $\deg(H_i)=\deg(F_j)-\deg(F_i)
$ for every $i\neq j$, then 
$$\Res\left(F_1,\dots,F_{j-1},F_j+\sum_{i\neq j}H_iF_i,F_{j+1},\dots,F_n\right)=\Res(F_1,\dots,F_n).$$
\end{enumerate}
\end{lemma}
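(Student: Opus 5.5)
This lemma collects three standard properties of the resultant from \cite{Jou91} and \cite{GKZ}. I would not reprove them from scratch, but reduce each to the universal setting and argue there using the characterizing properties in \Cref{teo:resultant}. The reduction works because $\Res$ is an element of $\bbU_{d_1,\dots,d_n}$ and evaluation at the coefficients of $F_1,\dots,F_n$ is a ring homomorphism $\bbU\otimes A\to A$. Hence any polynomial identity among resultants that holds universally, with generic coefficients over $\bbZ$, specializes to every commutative ring $A$. So for each item I work with generic polynomials over the coefficient ring $\bbU$, or over $\bbU$ adjoined the generic coefficients of $H$ or $H_i$, which is a domain.

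\textbf{Item (1).} I would use the classical fact that $\Res$ is homogeneous in the coefficients of $F_i$ of degree $\prod_{k\neq i}d_k$. One way to see this: the rescaling $u_{i,\alpha}\mapsto \lambda u_{i,\alpha}$ (for fixed $i$) preserves the vanishing locus of $\Res$. By irreducibility, $\Res(\dots,\lambda F_i,\dots)=c(\lambda)\Res$, and comparing homogeneous components shows that $\Res$ is homogeneous in each block of variables. The degree $\prod_{k\neq i}d_k$ can be read off by specializing $F_k=x_k^{d_k}$ for $k\ne i$ and $F_i=x_i^{d_i}+t\,\ell^{d_i}$, or more directly from the Poisson formula. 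Taking that formula from \cite{Jou91}*{Proposition 2.3} is fine here.

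\textbf{Item (2).} Work generically. Then $\Res(\dots,F_jH,\dots)$ vanishes exactly where $\Res(\dots,F_j,\dots)$ or $\Res(\dots,H,\dots)$ vanishes, since a common zero of the system with $F_jH$ is a common zero of one of the two smaller systems. The two factors are irreducible and distinct in the larger polynomial ring. By the Nullstellensatz over $\bar{\mathbb Q}$, the left-hand side is therefore $c\,R_1^aR_2^b$. Item (1) fixes the total degrees: the degree in the coefficients of the $F_k$, $k\ne j$, equals $d_j+\deg H$ times the appropriate product, which forces $a=b=1$. Specializing to monomials, $F_k=x_k^{d_k}$ and $F_j=x_j^{d_j}$, $H=x_j^{e}$, and using normalization (2) of \Cref{teo:resultant}, gives $c=1$.

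\textbf{Item (3).} The map $F_j\mapsto F_j+\sum H_iF_i$ is an automorphism of the coefficient ring over which the others are fixed. It preserves the zero locus of the system, so it sends $\Res$ to an irreducible polynomial with the same zero set. Therefore $\Res(\dots,F_j+\sum H_iF_i,\dots)=c\Res$ with $c$ a unit in $\bbZ[\text{coefficients of }H_i]$, that is, $c=\pm1$. Setting $H_i=0$ gives $c=1$.

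The main obstacle is Item (1)'s degree count together with the exponent determination in Item (2). Both hinge on knowing the multidegree of $\Res$, so I would import that from \cite{Jou91} (or the Poisson product formula) rather than rederive it.
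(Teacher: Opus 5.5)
Your proposal is correct, and it takes a genuinely different route from the paper. The paper gives no proof at all: it simply imports all three items from \cite{Jou91} and \cite{GKZ}.

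You instead derive item (3), item (2) and the homogeneity part of item (1) from the characterization in \Cref{teo:resultant}. You pass to a universal identity over $\bbZ$, which is what makes the statements valid over an arbitrary commutative ring $A$, for instance $A=\C[\lambda]$ as in \Cref{lemma: computations with resultant}. You then compare zero loci over $\bar{\mathbb{Q}}$ and use irreducibility and the Nullstellensatz to get $c\,R_1^aR_2^b$ (resp.\ $c\Res$). The exponents are fixed by block degrees and the constant by the normalization $\Res(x_1^{d_1},\dots,x_n^{d_n})=1$.

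What your route buys is a clean isolation of the single non-formal input, namely the multidegree $\prod_{k\neq i}d_k$ of $\Res$. You take it from the literature, just as the paper does. The paper's citation is shorter, and Jouanolou's treatment handles everything intrinsically over any ring.

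Your alternative hint for computing that degree does not work on its own. Specializing $F_k=x_k^{d_k}$ for $k\neq i$ only shows that $\Res$ becomes $\kappa c^m$, where $c$ is the coefficient of $x_i^{d_i}$ in $F_i$. No zero-locus argument can determine $m$, so the Poisson formula (or an equivalent multiplicity count) is genuinely needed.

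Two minor points:
\begin{itemize}
\item The exponent count in item (2) uses an index $k\neq j$, so you should treat $n=1$ separately. That case is trivial, since the characterization forces $\Res(cx^d)=c$.
\item In item (3), your Nullstellensatz argument is over $\bar{\mathbb{Q}}$, so the constant $c$ is just a nonzero element of $\bar{\mathbb{Q}}$. Setting all $H_i=0$ gives $c=1$ immediately; there is no need to discuss units of a ring of integer polynomials.
\end{itemize}
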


Resultants are useful to  define the main objects of this paper: the characteristic polynomial and the eigenschemes of a tensor. We also recall the notions of multiplicity from the introduction.  As pointed out both in \cite{multiplicities}*{Section 5.2} and in  \cite{GGTV}*{Section 1} the definitions of eigenvalue and eigenvector
depend only on the algebraic maps defined by $T$ and $\bft$, rather than on $T$ and $\bft$ themselves.
For this reason we may regard the tensors $T$ and $\bft$ as tuples of degree $d$ homogeneous polynomials. Therefore it makes sense to talk about their resultant.

\begin{defn}\label{definition: characteristic polynomial}
Let $T,\bft\in \C^n\ot((\C^n)^*)^{\ot d}
$ such that $\Res(\bft)\neq 0$. The \emph{$\bft$-characteristic polynomial} of $T$ is $\f_{\bft,T}(\lambda)=\res(T-\lambda\bft)\in\C[\lambda]$. If $\lambda_0\in\C$, then the \emph{$\bft$-algebraic multiplicity} of $\lambda_0$ with respect to $T$ is the multiplicity of $\lambda_0$ as a root of $\f_{\bft,T}$.
\end{defn}
By \Cref{teo:resultant} the number $\lambda_0$ is a root of $\f_{\bft,T}$ if and only if 
$\lambda_0$ is a $\bft$-eigenvalue of $T$. If we denote by
\[D(n,d)=nd^{n-1},
\]
then \cite{FO14}*{Theorem 15} states that $\deg(\f_{\bft,T})=D(n,d)$.

\begin{remark}\label{remark: an eigenvector has only one eigenvalue}
In \cite{FO14}*{Section 1}, $\bft$ is called \emph{non-singular} if $\Res(\bft)\neq 0$. Since we assume $\bft$ to be non-singular, every $\bft$-eigenvector of $T$ has a unique $\bft$-eigenvalue. Indeed, 
our hypothesis implies that $\bft(w^{\ot d})\neq 0$ for every $w\in\C^n\setminus\{0\}$, so if $T(w^{\ot d})=\lambda\bft(w^{\ot d})=\mu \bft(w^{\ot d})$, then $\lambda=\mu$.
\label{remark: complete intersection}We can state the property $\Res(\bft)\neq 0$ in a different way. If we  
represent $\bft$ as a tuple of homogeneous degree $d$ polynomials $(G_1,\dots,G_n)$ in $n$ variables, then $\Res(\bft)\neq 0$ means that the only common zero of $G_1,\dots,G_n$ is the trivial one. In algebraic terms $G_1,\dots,G_n$ are a regular sequence.
\end{remark}

\begin{remark}
As pointed out in \cite{GGTV}*{Remark 2.3}, in the case $d=1$ of matrices the notion of $\bft$-eigenvalue is independent on $\bft$: if $T$ and $\bft$ are matrices, then our hypothesis $\res(\bft)\neq 0$ implies that $\bft$
is invertible. In this case
\[T(w)=\lambda\bft(w)\Leftrightarrow (\bft^{-1}\cdot T)(w)=\lambda w,\]
hence $w$ is a $\bft$-eigenvector of $T$ with $\bft$-eigenvalue $\lambda$
if and only if $w$ is a $\id$-eigenvector of $\bft^{-1}\cdot T$ with $\id$-eigenvalue $\lambda$.
\end{remark}

\begin{definition}\label{definition: multiplicity eigenscheme} 
Let $T,\bft\in \C^n\ot((\C^n)^*)^{\ot d}
$ such that $\Res(\bft)\neq 0$ and let $\lambda_0\in\C$. If we write
\[T=(F_1,\dots,F_n)\mbox{ and }\bft=(G_1,\dots,G_n)
\]
as tuples of homogeneous polynomials, then the  \textit{$\bft$-eigenscheme} of $\lambda_0$ is the affine subscheme $E_{\bft,T}(\lambda_0)$ of $\C^n$ defined by the ideal $(F_1-\lambda_0G_1,\dots,F_n-\lambda_0G_n)$. The elements of $E_{\bft,T}(\lambda_0)$ are $\bft$-eigenvectors of $T$ with $\bft$-eigenvalues $\lambda_0$. The \emph{$\bft$-geometric multiplicity} of $\lambda_0$ is $\gm_{\bft,T}(\lambda_0)=\dim(E_{\bft,T}(\lambda_0))$. The \emph{$\bft$-span multiplicity} of $\lambda_0$ is \[\sm_{\bft,T}(\lambda_0)=\dim\bigl(\langle w\in W\mid w \mbox{ is a $\bft$-eigenvector of eigenvalue }\lambda_0\rangle\bigr).\]
\end{definition}

Since $E_{\bft,T}(\lambda_0)$ is defined by homogeneous polynomials, the eigenscheme is an \emph{affine cone}, that is, it is closed under scalar multiplication. This allows the geometrically inclined reader to think at $E_{\bft,T}(\lambda_0)$ as a subscheme of $\p^{n-1}$, rather than $\C^n$.

There are two more notions we want to recall in order to make sense of \Cref{conjecture: our version of am vs sm} and \Cref{conjecture: our version of am vs gm}. The \emph{degree} of a subscheme $X$ of $\C^n$ (or $\p^{n-1}$) is the number of intersection points with a general linear space of complementary dimension. If $X$ is defined by only one polynomial $F$, then $\deg(X)=\deg(F)$. If $X$ is an affine cone of dimension 1 in $\C^n$ (or equivalently a 0-dimensional subscheme of $\p^{n-1}$), then $\deg(X)$ is the number of lines (or the number of points), counted with multiplicity. Since the degree is a positive integer, \Cref{conjecture: our version of am vs gm} is stronger than \Cref{conjecture: algebraic vs geometric multiplicity}.

The second concept we need is a refinement of the span multiplicity. By \cite{Hart}*{Exercise II.5.10} closed subschemes of $\p^{n-1}$ are in bijection with saturated homogeneous ideals of $\C[x_1,\dots,x_n]$. \begin{definition} The \emph{scheme-theoretic linear span} of an affine cone in $\C^{n}$
is the vanishing locus of the largest vector subspace of $\C[x_1,\dots,x_n]_1$ contained in its saturated ideal. If $T,\bft\in \C^n\ot((\C^n)^*)^{\ot d}
$ such that $\Res(\bft)\neq 0$ and let $\lambda_0\in\C$, we denote by $\lm_{\bft,T}(\lambda_0)$ the dimension of the scheme-theoretic linear span of $E_{\bft,T}(\lambda_0)$.
\end{definition}

As an example, the point $\{(0,0)\}\in\C^2$ corresponds to the saturated ideal $(x,y)$. Its scheme-theoretic linear span is defined by $x=y=0$, so it is a point, according to our expectation. On the other hand, the scheme-theoretic linear span of the scheme corresponding to the saturated ideal $(x,y^2)$ is the line $x=0$, so it has dimension 1. 
While $(x,y)$ and $(x,y^2)$ define the same set $\{(0,0)\}$, they correspond to different schemes. We can think of $(x,y^2)$ as two infinitely near points. The corresponding scheme has degree 2. The dimension of this scheme-theoretic linear span is at least the dimension of the linear span of the underlying set, hence \Cref{conjecture: our version of am vs sm} is stronger than \Cref{conjecture: am vs sm}.

As the following example shows, $E_{\bft,T}(\lambda)$ may have irreducible components of different dimension. In this case $\dim(E_{\bft,T}(\lambda))$ 
is the dimension 
of a maximal dimensional component.

\begin{example}\label{example: the eigenscheme is not equidimensional}
Let $\{e_1,e_2,e_3\}$ be a basis of $\C^3$. 
Let
\begin{align*}
T&=e_3\ot e^*_1\otimes e^*_2+2 e_1\ot e^*_1\otimes  e^*_3\in \C^3\ot(\C^3)^*\ot(\C^3)^*\mbox{ and }\\
\bft&=e_1\ot e^*_1\ot e_1^*+e_2\ot e^*_2\ot e_2^*+e_3\ot e^*_3\ot e_3^*\in \C^3\ot(\C^3)^*\ot(\C^3)^*.
\end{align*}   
The algebraic map associated to $T$ is $(x_1,x_2,x_3)\mapsto (2x_1x_3,0,x_1x_2)$, so we can identify $T$ with the triple $T=(2x_1x_3,0,x_1x_2)$. In the same way we can identify $\bft$ with $(x_1^2,x_2^2,x_3^2)$. Hence $n=3$, $d=2$ and $D(n,d)=12$. Then $\lambda_0$ is a $\bft$-eigenvalue of $T$ if and only if there exists $(v_1,v_2,v_3)\neq (0,0,0)$ such that
\[
\begin{cases}
2v_1v_3=\lambda_0 v_1^2\\
0=\lambda_0 v_2^2\\
v_1v_2=\lambda_0 v_3^2.
\end{cases}
\]
The only $\bft$-eigenvalue of $T$ is 0, so $\f_{\bft,T}(\lambda)=\lambda^{12}$. The corresponding eigenscheme is defined by the ideal $(x_1x_2,x_1x_3)$ and it is the union of the plane $V(x_1)$ and the line $ V(x_2,x_3)$. So $\am_{\bft,T}(0)=12$ and $\gm_{\bft,T}(0)=\dim(E_{\bft,T}(0))=2$. The eigenscheme is not contained in any hyperplane and the saturation of its ideal does not contain any linear form, so both $\sm_{\bft,T}(0)$ and 
$\lm_{\bft,T}(0)$ equal 2. Since both irreducible components of $E_{\bft,T}(0)$ have degree 1, \Cref{conjecture: our version of am vs sm} and \Cref{conjecture: our version of am vs gm} are satisfied.
\end{example}

\section{Eigenschemes of codimension at most 1}\label{section: geometric multiplicities n-1}
It may happen that every vector is an eigenvector. In that case it is simple to show that all the multiplicities behave as predicted by the conjectures.
\begin{remark}\label{remark: if gm=n then everything is satisfied}
Let $\lambda_0\in\bbC$ and let $T,\bft\in \bbC^n\otimes ((\bbC^n)^*)^{\otimes d}$ such that $\res(\bft)\neq 0$. By definition  $\gm_{\bft,T}(\lambda_0)\in\{0,\dots,n\}$. If $\gm_{\bft,T}(\lambda_0)=n$, then $E_{\bft,T}(\lambda_0)=\C^n$, so $\lm_{\bft,T}(\lambda_0=n$, $E_{\bft,T}(\lambda_0)$ is irreducible of degree 1 and every vector is a $\bft$-eigenvector of $T$ with $\bft$-eigenvalue $\lambda_0$. By \Cref{remark: an eigenvector has only one eigenvalue} $\lambda_0$ is the only $\bft$-eigenvalue of $T$, thus $\f_{\bft,T}(\lambda)=(\lambda-\lambda_0)^{D(n,d)}$. In particular \begin{align*}
\am_{\bft,T}(\lambda_0)=D(n,d)=nd^{n-1}=\deg(E_{\bft,T}(\lambda_0))\gm_{\bft,T}(\lambda_0)d^{\gm_{\bft,T}(\lambda_0)-1}\ge n=\lm_{\bft,T}(\lambda_0)
,\end{align*}
therefore \Cref{conjecture: our version of am vs sm} and \Cref{conjecture: our version of am vs gm} are satisfied.
\end{remark}

In this section we focus on the next case, when the geometric multiplicity is $n-1$. We provide bounds on the algebraic multiplicity and we employ them to prove that \Cref{conjecture: our version of am vs sm} and \Cref{conjecture: our version of am vs gm} hold under the assumption that the eigenscheme has codimension 1. The ideal of such varieties is contained in a principal ideal, that is an ideal generated by only one polynomial. This allows us to make some explicit computations with the resultant. 
Let us  formalize this idea.

\begin{remark}\label{Division}\label{remark: eigenscheme codim 1 principal ideal}
Let $\bft\in
\C^n\ot((\C^n)^*)^{\ot d}$ such that $\Res(\bft)\neq 0$. Given $\lambda_0\in\bbC$ we want to describe all the tensors $T$ such that $\dim (E_{\bft,T}(\lambda_0))\geq n-1$. Set $R=\bbC[x_1,\dots,x_n]$ and write 
\[
T=(F_1,\dots,F_n)\mbox{ and } \bft=(G_1,\dots,G_n)
\]
as two $n$-ples of polynomials of degree $d$. Since $\dim (E_{\bft,T}(\lambda_0))\geq n-1$, by \cite{Hart}*{Proposition I.1.13} there exists $H\in R$ such that
\[
(H)\supseteq I(E_{\bft,T}(\lambda_0))= (F_1-\lambda_0G_1,\dots,F_n-\lambda_0G_n),
\]
hence $H$ divides each generator of $I(E_{\bft,T}(\lambda_0))$. If we call $b=\deg(H)$, then this implies that $b\le d$ and there exist $M_1,\dots,M_n\in R_{d-b}$ such that
\begin{equation}\label{equation: F and G special form principal}
F_1=\lambda_0G_1+HM_1,\dots, F_n=\lambda_0G_n+HM_n.  
\end{equation}
Conversely, if $F_1,\dots,F_n$ are as in \eqref{equation: F and G special form principal}, then $V(H)$ is contained in $E_{\bft,T}(\lambda_0)$ and so $\dim (E_{\bft,T}(\lambda_0))\geq n-1$. Thus the set of the tensors $T$ such that $\dim (E_{\bft,T}(\lambda_0))\geq n-1$ can be identified with
\[
\calS_{n,d}(\lambda_0)=\{(\lambda_0G_1+HM_1,\dots,\lambda_0G_n+HM_n)\in R_d^{\oplus n}\;|\; H\in R_b,\,M_i\in R_{d-b}\text{ for some $b\in\N$} \}.
\]
In other words $\calS_{n,d}(\lambda_0)$ is the image of the morphism
\[
\begin{array}{cccc}
\Psi_{n,d,\lambda_0}:&\displaystyle\bigsqcup_{b=1}^d\left(R_b \oplus R_{d-b}^{\oplus n}\right) & \to & R_d^{\oplus n}\\
     &(H,M_1,\dots,M_n)&\mapsto & (\lambda_0G_1+HM_1,\dots,\lambda_0G_n+HM_n).
\end{array}
\]
\end{remark}

\Cref{Division} shows that $\calS_{n,d}(\lambda_0)$ is the image of an algebraic map, so it is  constructible by \cite{Hart}*{Exercise II.3.19}. While it has several irreducible components, indexed by $b$, on each component it makes sense to speak of the general tensor among those such that $\dim (E_{\bft,T}(\lambda_0))\geq n-1$ and $\deg(E_{\bft,T}(\lambda_0))=b$. The generality conditions on $\calS_{n,d}(\lambda_0)$ can be translated into generality conditions on the domain of the map. This is helpful in our proofs, provided that we find a way to pass from a statement about a general tensor to a statement about any tensor.

\begin{lemma}\label{lemma: from general to all of them}
Let $\bft\in \C^n\ot ((\C^n)^*)^{\ot d}$ such that $\res(\bft)\neq 0$. Let  $\lambda_0\in\C$ and $b,q\in\N$. Set
\[Y=\{T\in\C^n\ot ((\C^n)^*)^{\ot d}\mid \dim(E_{\bft,T}(\lambda_0))=n-1\mbox{ and }\deg(E_{\bft,T}(\lambda_0))=b\},\]
and suppose that
\begin{equation}\label{eq: disuguaglianza}
\am_{\bft,T}(\lambda_0)\ge q
\end{equation}
for the general element $T$ of $Y$. Then \eqref{eq: disuguaglianza} holds for every $T\in Y$.
\begin{proof}
By \Cref{Division} the set $Y$
is a dense subset of the irreducible set $\Psi_{n,d,\lambda_0}(R_d\oplus R_{d-b}^{\oplus n})$. Thus for every $T\in\Psi_{n,d,\lambda_0}(R_d\oplus R_{d-b}^{\oplus n})$ there exists a line $\{T_s\mid s\in\C\}$ such that $T_s\in Y$ for the general $s\in\C$ and $T_0=T$. Hence there exists a one-dimensional family of univariate polynomials $\f_{\bft,T_s}$ converging to $\f_{\bft,T}$. By hypothesis $(\lambda-\lambda_0)^q\mid \varphi_{\bft,T_s}$ for every $s\neq 0$. The coefficients of $\varphi_{\bft, T_s}$ are polynomials in the entries of $T_s$ and when these entries are specialized to those of $T$ we get $\varphi_{\bft,T}$
. In particular $(\lambda-\lambda_0)^q\mid\varphi_{\bft,T}$ and this concludes the proof.
\end{proof}
\end{lemma}
The next lemma describes the resultant when some of the polynomials have the form \eqref{equation: F and G special form principal}.

\begin{lemma}\label{lemma: computations with resultant}
Let $n\ge 2$ and $\lambda_0\in\C$. Let $H,M_1,M_2,G_1,G_2,F_3,\dots,F_n\in\C[x_1,\dots,x_n]$ be homogeneous polynomials. If $\res(G_2, HM_2,F_3,\dots,F_n)\neq 0$, then
\[
\res\begin{pmatrix}HM_1+(\lambda_0-\lambda)G_1\\HM_2+(\lambda_0-\lambda)G_2\\F_3\\\vdots\\F_n\end{pmatrix}=\frac{(\lambda_0-\lambda)^{\deg(H)\deg(F_3)\cdots\deg(F_n)}}{\res(G_2,M_2,F_3,\dots,F_n)}\cdot\res\begin{pmatrix}M_1G_2-M_2G_1\\HM_2+(\lambda_0-\lambda)G_2\\F_3\\\vdots\\F_n\end{pmatrix}.
\]
\begin{proof}
Apply \Cref{lemma: multilinearity of resultant}(3) and multiply the first polynomial by $G_2$ to get
\begin{align*}
\res\begin{pmatrix}HM_1+(\lambda_0-\lambda)G_1\\HM_2+(\lambda_0-\lambda)G_2\\F_3\\\vdots\\F_n\end{pmatrix}=\frac{\res(HM_1G_2+(\lambda_0-\lambda)G_1G_2,HM_2+(\lambda_0-\lambda)G_2,G_3,\dots,F_n)
}{\res(G_2,HM_2+(\lambda_0-\lambda)G_2,F_3,\dots,F_n)}.
\end{align*}
In the numerator we apply \Cref{lemma: multilinearity of resultant}(2) to replace $HM_1G_2+(\lambda_0-\lambda)G_1G_2$ by the algebraic combination \[HM_1G_2+(\lambda_0-\lambda)G_1G_2-G_1(HM_2+(\lambda_0-\lambda)G_2).\] In a similar way, in the denominator we replace the second polynomial $HM_2+(\lambda_0-\lambda)G_2$ by the linear combination $HM_2+(\lambda_0-\lambda)G_2-(\lambda_0-\lambda)G_2$, obtaining
\begin{align*}
\frac{\res(H(M_1G_2-M_2G_1),HM_2+(\lambda_0-\lambda)G_2,F_3,\dots,F_n)}{\res(G_2,HM_2,F_3,\dots,F_n)}.
\end{align*}
By applying again \Cref{lemma: multilinearity of resultant}, our fraction becomes
\begin{align*}
&\frac{\res(H,HM_2+(\lambda_0-\lambda)G_2,F_3,\dots,F_n)\res(M_1G_2-M_2G_1,HM_2+(\lambda_0-\lambda)G_2,F_3,\dots,F_n)}{\res(G_2,H,F_3,\dots,F_n)\res(G_2,M_2,F_3,\dots,F_n)}\\
&=\frac{\res(H,(\lambda_0-\lambda)G_2,F_3,\dots,F_n)\res(M_1G_2-M_2G_1,HM_2+(\lambda_0-\lambda)G_2,F_3,\dots,F_n)}{\res(G_2,H,F_3,\dots,F_n)\res(G_2,M_2,F_3,\dots,F_n)}.
\end{align*}
If we call $a=\deg(H)\deg(F_3)\cdots\deg(F_n)$ and we apply \Cref{lemma: pull out constants from the resultant}, then the latter equals
\begin{align*}
(\lambda_0-\lambda)^a&\frac{\res(H,G_2,F_3,\dots,F_n)\res(M_1G_2-M_2G_1,HM_2+(\lambda_0-\lambda)G_2,F_3,\dots,F_n)}{\res(G_2,H,F_3,\dots,F_n)\res(G_2,M_2,F_3,\dots,F_n)}\\
&=(\lambda_0-\lambda)^a\frac{\res(M_1G_2-M_2G_1,HM_2+(\lambda_0-\lambda)G_2,F_3,\dots,F_n)}{\res(G_2,M_2,F_3,\dots,F_n)}.\qedhere
\end{align*}
\end{proof}
\end{lemma}
\Cref{lemma: computations with resultant} tells us what happens if two of the polynomials have the form \eqref{equation: F and G special form principal}. Now we extend it to show what happens if there are more than two. For this purpose  we recursively define a function that we will use to bound the algebraic multiplicity.

\begin{definition}\label{definition: F recursive}
Let $b$, $d$ and $n$ be positive integers such that $n\ge 2$. For every $t\in\{1,\dots,n\}$ we define a function $A_{d,b,n,t}:\N^{n-t}\to\N$ recursively by letting
\begin{align*}
A_{d,b,n,0}(k_1,\dots,k_n)=A_{d,b,n,1}(k_2,\dots,k_n)=0
\end{align*}
and setting $A_{d,b,n,t}(k_{t+1},\dots, k_n)$ equal to \[bd^{t-2}k_{t+1}\cdots k_n+A_{d,b,n,t-1}(2d-b,k_{t+1},\dots, k_n)-A_{d,b,n,t-2}(d-b,d, k_{t+1}, \dots, k_n).\]
In particular, for $t=n$ we define $A_{d,b,n,n}=bd^{n-2}+A_{d,b,n,n-1}(2d-b)-A_{d,b,n,n-2}(d-b,d)\in\N$.\end{definition}

In \Cref{appendix: long formula} we work out some properties of  $A_{d,b,n,t}$ and we compute an explicit formula for $A_{d,b,n,n}$. We are ready to generalize \Cref{lemma: computations with resultant}.
\begin{proposition}\label{proposition: induction step algebraic multiplicity}
Let $b$, $d$ and $n$ be positive integers such that $b\le d$ and $n\ge 2$. Fix $G_1,\dots,G_s\in\C[x_1,\dots,x_n]_d$ and  let $H
,M_1,\dots,M_s,P_{s+1},\dots,P_n\in\C[x_1,\dots,x_n]$ be general polynomials such that $\deg(H)=b$ and $\deg(M_i)=d-b$ for every $i\in\{1,\dots,s\}$. Then the multiplicity of 0 as a root of 
\begin{equation}\label{equation: resultant with s times mu}
    \res(HM_1+\mu G_1,
\dots,
HM_s+\mu G_s,
P_{s+1},
\dots,
P_n)
\end{equation}equals $A_{d,b,n,s}(\deg(P_{s+1}),\dots, \deg(P_{n}))$.\end{proposition}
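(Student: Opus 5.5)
Write $k_i=\deg(P_i)$ and $v(s;k_{s+1},\dots,k_n)$ for the multiplicity of $0$ as a root of \eqref{equation: resultant with s times mu}. The plan is induction on $s$, using the recursion of \Cref{definition: F recursive} as a template. The identity to imitate is \Cref{lemma: computations with resultant} with $\lambda_0-\lambda=\mu$.

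\textbf{Base cases.}
\begin{itemize}
\item For $s=0$ the resultant $\res(P_1,\dots,P_n)$ does not involve $\mu$. It is nonzero because the $P_i$ are general, so the multiplicity is $0=A_{d,b,n,0}$.
\item For $s=1$, at $\mu=0$ the resultant becomes $\res(HM_1,P_2,\dots,P_n)=\res(H,P_2,\dots)\res(M_1,P_2,\dots)$ by \Cref{lemma: multilinearity of resultant}(2). Both factors are nonzero by generality, giving multiplicity $0=A_{d,b,n,1}$.
\end{itemize}

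\textbf{Inductive step ($s\ge 2$).} Apply \Cref{lemma: computations with resultant} to the first two polynomials, with $F_3,\dots,F_n$ being the remaining $HM_i+\mu G_i$ ($3\le i\le s$) and the $P_j$. The prefactor $\mu^{\deg(H)\deg(F_3)\cdots\deg(F_n)}$ contributes $b\,d^{s-2}k_{s+1}\cdots k_n$. This is the first term of the recursion.

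The denominator $\res(G_2,M_2,F_3,\dots,F_n)$ has degree sequence $(d,d-b,d,\dots,d,k_{s+1},\dots,k_n)$. Its $\mu$-adic valuation is computed by induction after reordering: it contains $s-2$ entries of the form $HM_i+\mu G_i$ and $n-s+2$ "other" polynomials $G_2,M_2,P_{s+1},\dots$. The problem is that $G_2$ is fixed, not general. I would handle this by observing that the valuation is upper semicontinuous and the answer is expected generically. Alternatively, I would note that the proof of the inductive statement only uses that the "other" polynomials, together with $H$ and the $M_i$, are general enough for certain resultants to be nonzero, which holds since $H,M_i$ are general. Granting this, the denominator contributes $A_{d,b,n,s-2}(d-b,d,k_{s+1},\dots,k_n)$, which is subtracted.

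The remaining numerator factor is $\res(M_1G_2-M_2G_1,\,HM_2+\mu G_2,\,F_3,\dots,F_n)$. Its first entry has degree $2d-b$ and does not involve $\mu$, and it has $s-1$ entries of the special form. Reordering, induction gives $A_{d,b,n,s-1}(2d-b,k_{s+1},\dots,k_n)$. Summing the three contributions gives exactly the recursion, proving the formula.

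\textbf{Hypotheses and main obstacle.} Two things must be checked:
\begin{itemize}
\item the hypothesis $\res(G_2,HM_2,F_3,\dots,F_n)\ne 0$ of \Cref{lemma: computations with resultant}, interpreted over $\C[\mu]$, i.e.\ the denominator is a nonzero polynomial in $\mu$;
\item that reordering the polynomials changes the resultant only by a sign.
\end{itemize}
The main obstacle is the genericity issue. The inductive calls feed in polynomials that are not general: the fixed $G_2$, and $M_1G_2-M_2G_1$, which is special because it lies in the ideal $(G_1,G_2)$. To handle this, I would strengthen the inductive hypothesis. The statement should require only that the $P_j$ be such that the relevant resultants at $\mu=0$ factor through $H$ and the $M_i$ as nonvanishing quantities. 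Concretely, the conditions are that $V(H,M_i\text{'s},P\text{'s})$ and the analogous intersections in the inductive calls are empty. These are open conditions, and each is nonempty because $H$ and the $M_i$ remain free and general. Verifying that these conditions propagate through the recursion is where I expect the real work to be.
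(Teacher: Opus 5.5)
\textbf{Comparison with the paper.} Your argument is the paper's proof. It uses induction on $s$ with the same base cases. For $s\ge 2$ it makes a single application of \Cref{lemma: computations with resultant}, whose prefactor, numerator and denominator give the three terms of \Cref{definition: F recursive}. The paper splits off the last two special entries, so its denominator contains $G_{s-1},M_{s-1}$ and its new entry is $M_{s-1}G_s-M_sG_{s-1}$. You split off the first two, which is only a relabelling.

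The genericity problem you isolate is genuine, and the paper does not resolve it either. It writes ``by induction hypothesis'' for numerator and denominator, even though their supposedly general slots contain the fixed $G_{s-1}$ and the special form $M_{s-1}G_s-M_sG_{s-1}$. It also never checks the nonvanishing hypothesis of the lemma. So your sketch is exactly as complete as the published argument, and the step you leave open is open there too.

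\textbf{Closing the gap.} Your semicontinuity idea cannot work. It only gives $v(N)\ge A_{d,b,n,s-1}(2d-b,\dots)$ and $v(D)\ge A_{d,b,n,s-2}(d-b,d,\dots)$, and since $v(D)$ is subtracted these bounds say nothing about $v$.

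The strengthened induction is the right route, but its conditions cannot be symmetric in the special entries. Indeed $Q=M_1G_2-M_2G_1$ lies in $(G_1,M_1)\cap(G_2,M_2)$. So in the inner call on $N$, the entry $HM_2+\mu G_2$ must occupy the first (eliminated) slot of the lemma. If it occupied the second slot, the lemma's hypothesis would fail for $b<d$: the factor $\res(G_2,M_2,Q,\dots)$ vanishes identically, because $Q$ vanishes on $V(G_2,M_2)$ and the remaining $n-1$ polynomials always have a common zero in $\mathbb{P}^{n-1}$. Your ``first two'' rule, like the paper's ``last two'' rule, happens to respect this, but the strengthened statement must encode it.

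You also need a nondegeneracy assumption on the fixed $G_i$, which the statement omits. For $G_1=\dots=G_s=0$ and $s\ge 2$ the resultant is identically zero; $\Res(\bft)\neq 0$ suffices.

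\textbf{An alternative.} You could bypass the recursion altogether. For general $H,M_i,P_j$ two things hold:
\begin{itemize}
\item the $s\times 2$ matrix $(G_i\mid M_i)$ has rank $2$ along $Z=V(H,P_{s+1},\dots,P_n)$;
\item $V(M_1,\dots,M_s,P_{s+1},\dots,P_n)=\emptyset$.
\end{itemize}
Hence near $\mu=0$ the zero scheme of the $n$ polynomials in $\mathbb{A}^1_\mu\times\mathbb{P}^{n-1}$ is the complete intersection $\{0\}\times Z$. Fulton's excess intersection formula then gives $v=\deg(Z)\cdot[h^{s-2}]\frac{(1+dh)^s}{1+bh}=k_{s+1}\cdots k_n\bigl(sd^{s-1}-\frac{d^s-(d-b)^s}{b}\bigr)$. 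One checks directly that this satisfies the recursion and initial values of \Cref{definition: F recursive}.
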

\begin{proof}
For every $i\in\{s+1,\dots,n\}$, call $d_i=\deg(P_i)$. We argue by induction on $s$. If $s=0$, then $\res(P_1,\dots,P_n)$ does not depend on $\mu$, so the multiplicity of 0 is $0=A_{d,b,n,0}(d_{1},\dots,d_n)$. If $s=1$, then our generality assumption guarantees that
$\res(HM_1,P_2,\dots,
P_n)\neq 0$, hence 0 is not a root of $\res(HM_1+\mu G_1,P_2,\dots,
P_n)$, thus its multiplicity is $0=A_{d,b,n,1}(d_{2},\dots,d_n)$. Now assume that $s\ge 2$. Up to a sign, \Cref{lemma: computations with resultant} implies that
\eqref{equation: resultant with s times mu} equals
\[\frac{\mu^{bd^{s-2}d_{s+1}\cdots d_n}}{\res\begin{pmatrix}HM_1+\mu G_1\\\vdots\\HM_{s-2}+\mu G_{s-2}\\G_{s-1},M_{s-1}\\P_{s+1},\dots, P_n\end{pmatrix}}\cdot\res\begin{pmatrix}HM_1+\mu G_1\\\vdots\\HM_{s-1}+\mu G_{s-1}\\M_{s-1}G_s- M_sG_{s-1}\\P_{s+1},\dots,P_n\end{pmatrix}.\]
By induction hypothesis the multiplicity of 0 as a root of the resultant at the numerator is $A_{d,b,n,s-1}(\deg(M_{s-1}G_s- M_sG_{s-1}),d_{s+1},\dots,d_n)$, while  the multiplicity as a root of the denominator is $A_{d,b,n,s-2}(\deg(M_{s-1}),\deg(G_{s-1}),d_{s+1},\dots,d_n)$, hence the multiplicity of 0 as a root of \eqref{equation: resultant with s times mu} equals
\begin{align*}
b&d^{s-2}d_{s+1}\cdots d_n+A_{d,b,n,s-1}(2d-b,d_{s+1},\dots,d_n)-A_{d,b,n,s-2}(d-b,d,d_{s+1},\dots,d_n)\\&=A_{d,b,n,s}(d_{s+1},\dots,d_n).\qedhere
\end{align*}
\end{proof}

We have all the ingredients to prove the main result of this section, which provides a sharp bound on the algebraic multiplicity of an eigenvalue when its eigenscheme has codimension 1.
\begin{theorem}\label{corollary: algebraic multiplicity when gm=n-1}
Let $T,\bft\in \C^n\ot ((\C^n)^*)^{\ot d}$ such that $\Res(\bft)\neq 0$ and let $X\subseteq \C^n$ be a general algebraic hypersurface. Let $\lambda_0\in\C$ and assume that $X$ is the union of all components of $E_{\bft,T}(\lambda_0)$ of maximal dimension. Then $\deg(X)\le d$ and $$\am_{\bft,T}(\lambda_0)\ge A_{d,\deg(X),n,n}
.$$
If moreover $T$ is general among tensors such that $X$ is the union of all maximal dimension components of $E_{\bft,T}(\lambda_0)$, then $\am_{\bft,T}(\lambda_0)= A_{d,\deg(X),n,n}$.
\begin{proof}
Let $b=\deg(X)$. By \Cref{remark: eigenscheme codim 1 principal ideal} $b\le d$ and we can write 
\begin{align*}
\bft&=(G_1,\dots,G_n)\mbox{ and } T=(HM_1+\lambda_0G_1,\dots,HM_n+\lambda_0G_n)
\end{align*}
for some homogeneous polynomials $H\in\C[x_1,\dots,x_n]_b$ and $M_1,\dots,M_n\in\C[x_1,\dots,x_n]_{d-b}$. Thanks to \Cref{lemma: from general to all of them} it is enough to prove that $\am_{\bft,T}(\lambda_0)= A_{d,b,n,n}$ under the assumption that $T$ is general. Therefore we assume that $H,M_1,\dots,M_n$ are general and we apply \Cref{proposition: induction step algebraic multiplicity} with $s=n$. Hence $\am_{\bft,T}(\lambda_0)$, which by definition is the multiplicity of $\lambda_0$ as a root of
\[\res\begin{pNiceMatrix}
F_1-\lambda G_1\\\vdots\\F_n-\lambda G_n
\end{pNiceMatrix}=\res\begin{pNiceMatrix}
HM_1+(\lambda_0-\lambda) G_1\\\vdots\\HM_n+(\lambda_0-\lambda) G_n
\end{pNiceMatrix},\]
equals $A_{d,b,n,n}$.
\end{proof}
\end{theorem}

\begin{corollary}\label{corollary: conjectures true when gm=n-1}
Let  $T,\bft\in \bbC^n\otimes ({\bbC^n}^*)^{\otimes d}$ such that $\res(\bft)\neq 0$ and let $\lambda_0\in\bbC$.
\begin{enumerate}[leftmargin=*]
\item 
If $\gm_{\bft,T}(\lambda_0)= n-1$, then $T$ satisfies  \Cref{conjecture: algebraic vs geometric multiplicity}\eqref{item: conjecture am gm weak} and \Cref{conjecture: our version of am vs sm}.
\item 
If moreover every irreducible component of $E_{\bft,T}(\lambda_0)$ has the same dimension, then $T$ satisfies \Cref{conjecture: our version of am vs gm} as well.
\end{enumerate}

\begin{proof}
By \Cref{corollary: algebraic multiplicity when gm=n-1} and  \Cref{lemma: Abnn is enough to beat geometric mult}  $\am_{\bft,T}(\lambda_0)\ge A_{d,b,n,n} \geq (n-1)bd^{n-2}\ge (n-1)d^{n-2}$, so \Cref{conjecture: algebraic vs geometric multiplicity}\eqref{item: conjecture am gm weak} holds. Consider now the linear span conjecture and call $b=\deg(E_{\bft,T}(\lambda_0))$. If $n\ge 3$, then $$\am_{\bft,T}(\lambda_0)\ge A_{d,b,n,n} \geq (n-1)d \geq n\ge \lm_{\bft,T}(\lambda_0).
$$
If $n=2$, then either
$\sm_{\bft,T}(\lambda_0)=
1\le \am_{\bft,T}(\lambda_0)$ and there is nothing else to prove, or $\lm_{\bft,T}(\lambda_0)=2$, so $b\ge 2$ and then we can apply again  \Cref{corollary: algebraic multiplicity when gm=n-1} and  \Cref{lemma: Abnn is enough to beat geometric mult} to obtain $\am_{\bft,T}(\lambda_0)\ge A_{d,b,2,2} \geq b\ge =\lm_{\bft,T}(\lambda_0)
$.

Now assume that every irreducible component of the eigenscheme has dimension $n-1$ and let $E_1,\dots,E_s$ be the irreducible components of $E_{\bft,T}(\lambda_0)$. Then 
$b=\deg(E_1)+\dots+\deg(E_s)$ and so
\begin{align*}
\am_{\bft,T}(\lambda_0) &\geq A_{d,b,n,n} 
\ge (n-1)bd^{n-2}
=\sum_{i=1}^s\deg(E_i)(n-1)d^{n-2},
\end{align*}
where the first inequality is \Cref{corollary: algebraic multiplicity when gm=n-1} and the second inequality is \Cref{lemma: Abnn is enough to beat geometric mult}.
\end{proof}
\end{corollary}

\section{Eigenvalue multiplicities for binary and ternary tensors}\label{section: multiplicities n=3}\label{section: multiplicities n=2}
In this section we focus on small values of $n$. Our results in \Cref{section: geometric multiplicities n-1} are already enough to give the complete picture for $n=2$.

\begin{corollary}\label{cor:generality_n=2}\label{theorem: conjecture multiplicity holds for n=2}
Let $\lambda_0\in\bbC$ and let $T,\bft\in \bbC^2\otimes ((\bbC^2)^*)^{\otimes d}$such that $\res(\bft)\neq 0$.
\begin{enumerate}[leftmargin=*]
\item If $E_{\bft,T}(\lambda_0)=\C^2$, then $\am_{\bft,T}(\lambda_0)=D(2,d)=2d$.
\item If $E_{\bft,T}(\lambda_0)\subsetneq\C^2$, then $\am_{\bft,T}(\lambda_0)\ge\deg( E_{\bft,T}(\lambda_0))$.
\item If $E_{\bft,T}(\lambda_0)\subsetneq\C^2$ and $T$
is general in the component of $\calS_{2,d}(\lambda_0)$ corresponding to $b=\deg( E_{\bft,T}(\lambda_0))$
, then $\am_{\bft,T}(\lambda_0)=\deg( E_{\bft,T}(\lambda_0))$.
\end{enumerate}
In particular $T$ satisfies \Cref{conjecture: our version of am vs sm} and \Cref{conjecture: our version of am vs gm}.
\end{corollary}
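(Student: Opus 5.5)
For $n=2$ the eigenscheme $E_{\bft,T}(\lambda_0)$ is an affine cone in $\C^2$, so it has dimension $2$, $1$ or $0$. I split into these cases and reduce each to earlier results.

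If $E_{\bft,T}(\lambda_0)=\C^2$, \Cref{remark: if gm=n then everything is satisfied} gives $\f_{\bft,T}(\lambda)=(\lambda-\lambda_0)^{D(2,d)}$. Hence $\am_{\bft,T}(\lambda_0)=2d$, which is item (1), and both conjectures hold.

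If $\gm_{\bft,T}(\lambda_0)=1$, the scheme is a union of lines through the origin, so it is a hypersurface of $\C^2$. By \Cref{remark: eigenscheme codim 1 principal ideal} its ideal is contained in $(H)$ with $\deg H=b\le d$. For item (2) I first need to check that when $n=2$ the largest-dimensional part is the whole scheme, up to an embedded component at the origin. In dimension $2$ every nonzero homogeneous ideal defines a cone whose projectivization is $0$-dimensional, so all components other than the vertex have dimension $1$. Thus, after saturation, $\deg(E_{\bft,T}(\lambda_0))=b$, where $H$ is the gcd of the $F_i-\lambda_0G_i$.

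Next I apply \Cref{corollary: algebraic multiplicity when gm=n-1}, which gives $\am_{\bft,T}(\lambda_0)\ge A_{d,b,2,2}$, with equality for general $T$ in the component of $\calS_{2,d}(\lambda_0)$ indexed by $b$. The remaining task is to compute $A_{d,b,2,2}$ from \Cref{definition: F recursive}:
\[A_{d,b,2,2}=bd^{0}+A_{d,b,2,1}(2d-b)-A_{d,b,2,0}(d-b,d)=b+0-0=b.\]
This proves items (2) and (3). One caveat: \Cref{lemma: from general to all of them} is stated for $\dim=n-1$ and fixed degree $b$, and that is exactly the setting here.

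If $\gm_{\bft,T}(\lambda_0)=0$, I read the degree of the eigenscheme as that of a scheme supported at the origin. Since $\lambda_0$ is an eigenvalue, I expect there is always a nonzero eigenvector, so this case is either vacuous or covered by $\am\ge1$. I would phrase item (2) so that the positive-dimensional case is the substantive one.

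For the conjectures: \Cref{conjecture: our version of am vs gm} needs $\am_{\bft,T}(\lambda_0)\ge \sum\deg(E_i)\cdot 1\cdot d^0=b$, which is item (2), since the components are lines whose degrees sum to $b$. For \Cref{conjecture: our version of am vs sm}, $\lm_{\bft,T}(\lambda_0)\le 2$. It is $1$ when the scheme is a single reduced line, and then $\am\ge1$. Otherwise $b\ge2$ and $\am\ge b\ge 2$.

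The main obstacle I expect is the identification of $\deg(E_{\bft,T}(\lambda_0))$ with $\deg H$. This requires discarding embedded structure at the origin, that is, working with the saturation, and checking that the generality statement in (3) matches the parametrization by $\Psi_{2,d,\lambda_0}$.
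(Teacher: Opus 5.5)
Your proposal is correct and is essentially the paper's proof: item (1) comes from \Cref{remark: if gm=n then everything is satisfied}, items (2)--(3) come from \Cref{corollary: algebraic multiplicity when gm=n-1} with $A_{d,b,2,2}=b$, and the conjectures follow as in the $n=2$ case of \Cref{corollary: conjectures true when gm=n-1}, with your gcd/saturation remark making explicit why $\deg(E_{\bft,T}(\lambda_0))=\deg(H)$. The only slip is the case $\gm_{\bft,T}(\lambda_0)=0$: it is not vacuous, since it occurs exactly when $\lambda_0$ is not an eigenvalue, so $\am_{\bft,T}(\lambda_0)=0$ and (2) holds because the projectivized eigenscheme is empty of degree $0$, not because $\am\ge 1$ (the paper's proof also skips this case).
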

\begin{proof}
The first statement follows from \Cref{remark: if gm=n then everything is satisfied}. 
Assume now that $E_{\bft,T}(\lambda_0)\subsetneq\C^2$. Since $E_{\bft,T}(\lambda_0)$ is an affine cone, each of its components is a line and has codimension 1. Call $b=\deg(E_{\bft,T}(\lambda_0))$ and apply \Cref{corollary: algebraic multiplicity when gm=n-1} with $X=E_{\bft,T}(\lambda_0)$ to obtain \begin{align*}
\am_{\bft,T}(\lambda_0)&\ge A_{d,b,2,2}=b+A_{d,b,2,1}(2d-b)-A_{d,b,2,0}(d-b,d)=b,
\end{align*}
with equality if $T$ is general. Since $E_{\bft,T}(\lambda_0)$ is a union of lines, its degree $b$ is the number of its irreducible components, counted with multiplicities, so \Cref{conjecture: our version of am vs gm} holds. As for \Cref{conjecture: our version of am vs sm},
it is enough to apply \Cref{corollary: conjectures true when gm=n-1}.
\end{proof}

The next example shows that the inequality in \Cref{theorem: conjecture multiplicity holds for n=2} may be strict. 
\begin{example}
Let $\lambda_0=1$, $\bft=(G_1,G_2)$ and $T=(HM_1+G_1,HM_2+G_2)$, where
\begin{center}
\begin{tabular}{llc}
$G_1=(x_1-x_2)(x_1-2x_2)(x_1-4x_2)$     &  $G_2=(x_1+5x_2)(x_1-5x_2)(x_1+6x_2)$&\\
 $M_1=(x_1-x_2)(x_1+2x_2)$    & $M_2=(x_1-x_2)(x_1+x_2)$& $H=x_1+3x_2$.
\end{tabular}
\end{center}
Then $\deg( E_{\bft,T}(1 ))=\deg(H)=1$ and $\res(G_2,M_2)\neq 0$. By \Cref{lemma: computations with resultant}
\begin{align*}
\res\begin{pmatrix}HM_1+(1 -\lambda)G_1\\HM_2+(1 -\lambda)G_2\end{pmatrix}&=\frac{(1 -\lambda)^{\deg(H)}}{\res(G_2,M_2)}\cdot\res\begin{pmatrix}M_1G_2-M_2G_1\\HM_2+(1 -\lambda)G_2\end{pmatrix}\\
&=\frac{1 -\lambda}{\res(G_2,M_2)}\cdot\res\begin{pmatrix}M_1G_2-M_2G_1\\HM_2+(1 -\lambda)G_2\end{pmatrix}.
\end{align*}
Observe that $1 $ is a root of $\res(M_1G_2-M_2G_1,HM_2+(1 -\lambda)G_2)$, because by construction $M_1G_2-M_2G_1$ and $HM_2$ share the factor $x_1-x_2$. Hence $\am_{\bft,T}(1)>\deg(E_{\bft,T}(1))$.
\end{example}

\Cref{cor:generality_n=2} deals with  $2\times 2\times \dots\times 2$ tensors, so we move to the case $n=3$. We provide bounds on the algebraic multiplicity that imply \Cref{conjecture: our version of am vs sm} and \Cref{conjecture: algebraic vs geometric multiplicity}\eqref{item: conjecture am gm weak}. While we cannot fully prove the stronger \Cref{conjecture: our version of am vs gm}, our bounds are strictly better than  the ones predicted by \Cref{conjecture: algebraic vs geometric multiplicity}\eqref{item: conjecture am gm weak}.

When $n=3$ the eigenscheme can have irreducible components of codimension 1 and 2. For codimension 1 we can rely on \Cref{corollary: algebraic multiplicity when gm=n-1}. If the codimension is 2, the ideal of the eigenscheme is no longer principal. However, the saturated ideals of schemes of codimension 2 have a very precise structure: as described by the Hilbert-Burch theorem - see \cite{eisenbudsyzygies}*{Proposition 3.1 and Theorem 3.2} or
\cite{CLO05}*{Chapter 6, Proposition 2.6} - they are determinantal, namely they admit a set of generators consisting of the minors of a matrix of polynomials. Unfortunately the ideal 
introduced in \Cref{definition: multiplicity eigenscheme} is not always saturated
. This prevents us from applying such result to every codimension 2 eigenscheme. In the following remark we describe a class of tensors such that we are able to perform our computations.
\begin{remark}\label{remark: determinantal}
Let $\bft\in
\C^3\ot((\C^3)^*)^{\ot d}$ such that $\Res(\bft)\neq 0$. Given $\lambda_0\in\bbC$ we want to describe all the tensors $T$ such that the ideal defining $E_{\bft,T}(\lambda_0)$ is determinantal. More precisely, if we set $R=\C[x_1,x_2,x_3]$ and write
\[
T=(F_1,F_2,F_3)\mbox{ and } \bft=(G_1,G_2,G_3)
\]
as two elements of $R_d^{\oplus 3}$, we want to characterize the triples $(F_1,F_2,F_3)$ such that the ideal
$$(F_1-\lambda_0 G_1, F_2-\lambda_0G_2, F_3-\lambda_0G_3)$$
is generated by the $2\times 2$ minors of a matrix
$$
M=\begin{pmatrix}
A_1 & A_2 & A_3\\
B_1 & B_2 & B_3
\end{pmatrix}$$
with $A_1,A_2,A_3\in R_a$ and  $B_1,B_2,B_3\in R_{d-a}$ for some $1\leq a\leq d$. We denote the set of such tensors as $\calT_\bft(\lambda_0)\subset R_d^{\oplus 3}$. Since 
$$(F_1-\lambda_0 G_1, F_2-\lambda_0G_2, F_3-\lambda_0G_3)=(A_1B_2-A_2B_1,A_1B_3-A_3B_1,A_2B_3-A_3B_2),$$
there exists $P\in \GL_3(\bbC)$ such that 
$$
\begin{pmatrix}
F_1\\
F_2\\
F_3
\end{pmatrix}=P\begin{pmatrix}
A_1B_2-A_2B_1\\
A_1B_3-A_3B_1\\
A_2B_3-A_3B_2
\end{pmatrix}+\lambda_0\begin{pmatrix}
G_1\\
G_2\\
G_3
\end{pmatrix}.$$
It follows that the algebraic map
\[
\begin{array}{cccc}
\Theta_{d,\lambda_0}:&\displaystyle\GL_3(\C)\times\bigsqcup_{a=1}^d\left(R_a^{\oplus 3} \oplus R_{d-a}^{\oplus 3}\right) & \to & R_d^{\oplus 3}\\
     &(P,(A_1,A_2,A_3,B_1,B_2,B_3))&\mapsto & P\begin{pmatrix}
A_1B_2-A_2B_1\\
A_1B_3-A_3B_1\\
A_2B_3-A_3B_2
\end{pmatrix}
\end{array}
\]
is surjective on $\calT_\bft(\lambda_0)$. 
\end{remark}

As we argued in \Cref{section: geometric multiplicities n-1}, \Cref{remark: determinantal} shows that $\calT_{\bft}(\lambda_0)$ is the image of an algebraic map, so it is  constructible by \cite{Hart}*{Exercise II.3.19}. While it has several irreducible components, indexed by $a$, on each component it makes sense to speak of the general tensor among those such that $(F_1-\lambda_0 G_1, F_2-\lambda_0G_2, F_3-\lambda_0G_3)$ is determinantal for some $A_1,A_2,A_3\in R_a$ and  $B_1,B_2,B_3\in R_{d-a}$. The generality conditions on $\calT_{\bft}(\lambda_0)$ can be translated into generality conditions on the domain of the map. 

\begin{proposition}\label{proposition: n=3 hilbert burch} Let $\bft\in \C^3\otimes ((\C^3)^*)^{\otimes d}$ such that $\res(\bft)\neq 0$ and let $\lambda_0\in\C$. If $T\in\calT_\bft(\lambda_0)$, then $\am_{\bft,T}(\lambda_0)\ge \frac{d^2}{2}+1$.
\begin{proof}
Let $F_1,F_2,F_3\in\C[x_1,x_2,x_3]_d$ be the homogeneous polynomials representing $T$ and let $G_1,G_2,G_3\in\C[x_1,x_2,x_3]_d$ representing $\bft$. By hypothesis the ideal
\[
(F_1-\lambda_0G_1,F_2-\lambda_0G_2,F_3-\lambda_0G_3)\]
is determinantal. 
More precisely there exist 
a positive integer $a$ and six polynomials $A_1,A_2,A_3\in \C[x_1,x_2,x_3]_a$ and  $B_1,B_2,B_3\in \C[x_1,x_2,x_3]_{d-a}$ such that $$(F_1-\lambda_0G_1,F_2-\lambda_0G_2,F_3-\lambda_0G_3)=(A_1B_2-A_2B_1,A_1B_3-A_3B_1,A_2B_3-A_3B_2).$$
Just like in \Cref{corollary: algebraic multiplicity when gm=n-1}, thanks to \Cref{remark: determinantal} it is enough to prove the statement under the assumption that $T$ is general in its component of $T\in\calT_\bft(\lambda_0)$. 
By construction there exists $P\in\GL_3(\C)$ 
sending
\begin{align*}
F_1-\lambda_0G_1&\mapsto A_1B_2-A_2B_1\\
F_2-\lambda_0G_2&\mapsto A_1B_3-A_3B_1\\
F_3-\lambda_0G_3&\mapsto A_2B_3-A_3B_2.\end{align*}
Therefore the  $\bft$-characteristic polynomial of $T$ is
\begin{align*}
\res\begin{pNiceMatrix}
F_1-\lambda G_1\\F_2-\lambda G_2\\F_3-\lambda G_3\end{pNiceMatrix}=\res\begin{pNiceMatrix}
(\lambda_0-\lambda)G_1+A_1B_2-A_2B_1\\
(\lambda_0-\lambda)G_2+A_1B_3-A_3B_1\\
(\lambda_0-\lambda)G_3+A_2B_3-A_3B_2\end{pNiceMatrix}.
\end{align*}
In order to lighten the notation we denote $\mu=\lambda_0-\lambda$. Notice that the determinantal generators satisfy the relation
\[A_3(A_1B_2-A_2B_1)-A_2(A_1B_3-A_3B_1)+A_1(A_2B_3-A_3B_2)=0.
\]
In order to make use of this relation we apply \Cref{lemma: multilinearity of resultant}(2), multiplying the first polynomial by $A_3$ so that the $\bft$-characteristic polynomial of $T$ equals
\begin{align*}
\frac{\res\begin{pNiceMatrix}
\mu G_1A_3+A_3(A_1B_2-A_2B_1)\\
\mu G_2+A_1B_3-A_3B_1\\
\mu G_3+A_2B_3-A_3B_2\end{pNiceMatrix}}{\res\begin{pNiceMatrix}
A_3\\
\mu G_2+A_1B_3-A_3B_1\\
\mu G_3+A_2B_3-A_3B_2\end{pNiceMatrix}}.
\end{align*}
In the numerator we apply \Cref{lemma: multilinearity of resultant}(3) to replace the first polynomial 
by \[\mu G_1A_3+A_3(A_1B_2-A_2B_1)-A_2(\mu G_2+A_1B_3-A_3B_1)+A_1(\mu G_3+A_2B_3-A_3B_2).\]
In the denominator we also apply \Cref{lemma: multilinearity of resultant}(3) to replace the second polynomial 
by $\mu G_2+A_1B_3-A_3B_1-B_1A_3$ and the third polynomial 
by $\mu G_3+A_2B_3-A_3B_2-B_2A_3$. Hence the $\bft$-characteristic polynomial of $T$ becomes
\begin{align*}
\frac{\res\begin{pNiceMatrix}
\mu (G_1A_3-G_2A_2+G_3A_1)\\
\mu G_2+A_1B_3-A_3B_1\\
\mu G_3+A_2B_3-A_3B_2\end{pNiceMatrix}}{\res\begin{pNiceMatrix}
A_3\\
\mu G_2+A_1B_3\\
\mu G_3+A_2B_3\end{pNiceMatrix}}=\frac{\mu^{d^2}\res\begin{pNiceMatrix}
G_1A_3-G_2A_2+G_3A_1\\
\mu G_2+A_1B_3-A_3B_1\\
\mu G_3+A_2B_3-A_3B_2\end{pNiceMatrix}}{\res\begin{pNiceMatrix}
A_3\\
\mu G_2+A_1B_3\\
\mu G_3+A_2B_3\end{pNiceMatrix}}
\end{align*}
by \Cref{lemma: pull out constants from the resultant}(1). By generality of $T$ we can apply \Cref{lemma: computations with resultant} to obtain
\begin{align*}
\frac{\mu^{d^2}\res\begin{pNiceMatrix}
G_1A_3-G_2A_2+G_3A_1\\
\mu G_2+A_1B_3-A_3B_1\\
\mu G_3+A_2B_3-A_3B_2\end{pNiceMatrix}\res(G_3,A_2,A_3)}{\mu^{\deg(B_3)\deg(A_3)}\res(
A_3,
A_1G_3-A_2G_2,\mu G_3+A_2B_3)}.
\end{align*}
Again by our generality assumption, $\mu=0$ is not a root of
\[\res\begin{pNiceMatrix}
G_1A_3-G_2A_2+G_3A_1\\
\mu G_2+A_1B_3-A_3B_1\\
\mu G_3+A_2B_3-A_3B_2\end{pNiceMatrix}\]
nor of $\res(A_3,
A_1G_3-A_2G_2,
\mu G_3+A_2B_3)$. Without loss of generality,  assume that $1\le \deg(A_3)\le\frac{d}{2}$, hence
\begin{align*}
\am_{\bft,T}(\lambda_0)&=d^2-\deg(B_3)\deg(A_3)=d^2-(d-\deg(A_3))\deg(A_3)\\
&= d^2-d\deg(A_3)+\deg(A_3)^2\ge \frac{d^2}{2}+1.\qedhere
\end{align*}
\end{proof}
\end{proposition}
We stress that if $n=3$,  $\gm_{\bft,T}(\lambda_0)=1$ and the ideal from \Cref{definition: multiplicity eigenscheme} is saturated, then $T\in\calT_\bft(\lambda_0)$ by the Hilbert-Burch theorem, hence \Cref{proposition: n=3 hilbert burch} implies \Cref{theorem: summary contribution multiplicities}(5). The next example shows that we cannot apply \Cref{proposition: n=3 hilbert burch} to non-saturated ideals.
\begin{example}
Let $n=d=2$ and consider $T=(y^2,z^2,yz)$ and $\bft=(x^2,y^2,z^2)$. The Macaulay2 code
\begin{lstlisting}[language=code]
loadPackage "Resultants";
R=QQ[l][x,y,z]; f_1=y^2; f_2=z^2; f_3=y*z;
N={f_1-l*x^2,f_2-l*y^2,f_3-l*z^2}; factor resultant(N) 
I=ideal(f_1-x^2,f_2-y^2,f_3-z^2); J=sub(I,QQ[x,y,z]); dim J \end{lstlisting}
shows that 1 is a $\bft$-eigenvalue of $T$ with $\am_{\bft,T}(1)=2$ and $\gm_{\bft,T}(1)=1$. 
\end{example}

\begin{remark}\label{remark: hilbert burch does not imply the strong conjecture but it's better thatn the weak conjecture} Let us compare our bounds from  \Cref{remark: if gm=n then everything is satisfied}, \Cref{corollary: algebraic multiplicity when gm=n-1} and \Cref{proposition: n=3 hilbert burch} with the values predicted by 
the conjectures in the case $n=3$.
\begin{enumerate}[leftmargin=*]
\item If $\gm_{\bft,T}(\lambda_0)=3$, then \Cref{remark: if gm=n then everything is satisfied} implies that \Cref{conjecture: our version of am vs sm} and \Cref{conjecture: our version of am vs gm} hold.
\item If $\gm_{\bft,T}(\lambda_0)=2$ and every irreducible component of the eigenscheme has dimension 2, then \Cref{corollary: conjectures true when gm=n-1}(2) again implies that both \Cref{conjecture: our version of am vs sm} and \Cref{conjecture: our version of am vs gm} hold. If the eigenscheme has both components of dimension 1 and components of dimension 2, then \Cref{conjecture: our version of am vs sm} and \Cref{conjecture: algebraic vs geometric multiplicity}\eqref{item: conjecture am gm weak} hold by \Cref{corollary: conjectures true when gm=n-1}(1), but our bound is not strong enough to prove \Cref{conjecture: our version of am vs gm}, because it does not account for the components of dimension 1.
\item If $\gm_{\bft,T}(\lambda_0)=1$, then $E_{\bft,T}(\lambda_0)$ is a union of lines through the origin. \Cref{conjecture: our version of am vs gm} predicts that the algebraic multiplicity is at least the number of lines, counted with multiplicity, or equivalently that  $\am_{\bft,T}(\lambda_0)\ge \deg(E_{\bft,T}(\lambda_0))$. Our bound does not depend on $\deg(E_{\bft,T}(\lambda_0))$, but only on $d$. In particular \Cref{proposition: n=3 hilbert burch} does not imply \Cref{conjecture: our version of am vs gm}. However, our bound considerably improves the weaker version. In fact, \Cref{conjecture: algebraic vs geometric multiplicity}\eqref{item: conjecture am gm weak} only predicts that $\am_{\bft,T}(\lambda_0)\ge 1$. About the linear span, our bound implies that $\am_{\bft,T}(\lambda_0)\ge \frac{d^2}{2}+1\ge 3=n$, hence \Cref{conjecture: our version of am vs sm} holds under these assumptions.
\end{enumerate}\end{remark}

\section{Eigenvalues of low-rank tensors}\label{section: eigenvalues of low rank tensors}
In this section we relate the rank of a tensor with some properties of its eigenvalues and eigenvectors. We focus on tensors $T\in \C^n \ot ((\C^n)^*)^{\ot d}$ having rank less or equal than $n$, for which the eigenvalue 0 plays an important role. In this process we provide more evidence for \Cref{conjecture: algebraic vs geometric multiplicity} and \Cref{conjecture: our version of am vs sm}. 
A similar analysis has been carried out in \cite{multiplicities}*{Section 4} with different perspective and techniques,  by  considering the marginal rank of the tensor, but only for symmetric tensors and only for a specific choice of $\bft$. Moreover \cite{multiplicities} does not treat the linear span multiplicity.

By \Cref{definition: multiplicity eigenscheme} the geometric multiplicity of 0 is independent on $\bft$. However, the algebraic multiplicity is not, as the following example shows.

\begin{example}
Let $n=d=2$. Take  
$T=(x^2,0)$ and consider
\begin{align*}
\bft_1=(x^2+xy+y^2,x^2+y^2)
\mbox{ and }\bft_2=(x^2+xy+y^2,x^2+xy-y^2).
\end{align*}
The Macaulay2 code
\begin{lstlisting}[language=code]
loadPackage "Resultants"; A=QQ[l]; R=A[x,y];
t1={x^2+x*y+y^2,x^2+x*y}; t2={x^2+x*y+y^2,x^2+x*y-y^2};
S1={l*(x^2+x*y+y^2)-x^2,l*(x^2+x*y)};
S2={l*(x^2+x*y+y^2)-x^2,l*(x^2+x*y-y^2)};
resultant t1, resultant t2, resultant S1, resultant S2
\end{lstlisting}
shows that 
$\bft_1$ and $\bft_2$ are non-singular,  $\f_{\bft_1,T}(\lambda)=\lambda^4-\lambda^3$ and  $\f_{\bft_2,T}(\lambda)=4\lambda^4-6\lambda^3+\lambda^2$. Hence $\am_{\bft_1,T}(0)\neq \am_{\bft_2,T}(0)$.
\end{example}
Even though the algebraic multiplicity of $0$ can vary with $\bft$, for low-rank tensors we give an intrinsic lower bound depending only on the rank of $T$.  
\begin{proposition}\label{proposition: low rank high zero multipl}
Let 
$T,\bft\in \C^n\ot ((\C^n)^*)^{\ot d}$ such that $\Res(\bft)\neq 0$ and let $r=\rk(T)$. If $r\in\{0,\dots,n\}$, then $\am_{\bft,T}(0)\ge (n-r)d^{n-1}$, and equality holds if $T$ is general among tensors of rank $r$.
\end{proposition}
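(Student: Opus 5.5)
Write $T=\sum_{i=1}^r v_i\ot(\alpha_{i,1}\ot\dots\ot\alpha_{i,d})$. The plan is to exploit the fact that the algebraic map $T$ has image inside $V=\langle v_1,\dots,v_r\rangle$, of dimension at most $r$. After a change of coordinates in the target we may assume $V\subseteq\langle e_1,\dots,e_r\rangle$. Then $T=(F_1,\dots,F_r,0,\dots,0)$, where $F_i$ is a linear combination of the products $\alpha_{j,1}\cdots\alpha_{j,d}$. This change of basis acts on both $T$ and $\bft$ by the same $P\in\GL_n(\C)$. By the multilinearity in \Cref{lemma: multilinearity of resultant}, the resultant of $P\cdot(T-\lambda\bft)$ is a nonzero constant times $\res(T-\lambda\bft)$: a row operation adds multiples of one polynomial to another of the same degree, which is covered by part (3) with constant $H_i$. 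Hence $\f_{\bft,T}$ is unchanged up to a scalar, and $\res(P\bft)\ne0$ still holds. The characteristic polynomial becomes
\[\res(F_1-\lambda G_1,\dots,F_r-\lambda G_r,-\lambda G_{r+1},\dots,-\lambda G_n).\]

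Next we apply \Cref{lemma: pull out constants from the resultant}(1) to each of the last $n-r$ entries. Each extraction of $-\lambda$ contributes a factor $(-\lambda)^{d^{n-1}}$, since all degrees equal $d$. This gives
\[\f_{\bft,T}(\lambda)=(-1)^{(n-r)d^{n-1}}\lambda^{(n-r)d^{n-1}}\res(F_1-\lambda G_1,\dots,F_r-\lambda G_r,G_{r+1},\dots,G_n),\]
which yields $\am_{\bft,T}(0)\ge(n-r)d^{n-1}$ immediately. The case $r=0$ is consistent with \Cref{remark: if gm=n then everything is satisfied}. Note also that the remaining resultant is a genuine polynomial in $\lambda$, so nothing is lost by dividing.

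For equality with $T$ general of rank $r$, it suffices to show that $\lambda=0$ is not a root of the remaining factor. That is, we need
\[\res(F_1,\dots,F_r,G_{r+1},\dots,G_n)\ne0\]
for a suitable rank-$r$ tensor compatible with the given $\bft$ in the transformed coordinates. This nonvanishing is an open condition on the parameters $(v_i,\alpha_{i,j})$, and the set of rank-$\le r$ tensors is irreducible. So it is enough to exhibit a single rank-$r$ tensor where it holds. Equivalently, we need $V(F_1,\dots,F_r)\cap V(G_{r+1},\dots,G_n)=\{0\}$.

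This is the main obstacle, and it is a dimension count. The $G$'s are part of a regular sequence, because $\res(\bft)\ne0$ after the transformation, so $Z=V(G_{r+1},\dots,G_n)$ is a cone of dimension $r$. For $r\le n$ we may choose $v_i=e_i$ and $\alpha_{i,1}=\dots=\alpha_{i,d}=\ell_i$ with $\ell_1,\dots,\ell_r$ general linear forms. Then $F_i=\ell_i^d$, and $V(\ell_1,\dots,\ell_r)$ is a general linear space of codimension $r$, which meets the $r$-dimensional cone $Z$ only at the origin. One subtlety needs checking: the coordinate change $P$ depends on $V$, so genericity must be phrased properly. We fix the subspace $V=\langle e_1,\dots,e_r\rangle$ after moving $\bft$ by $P$, and vary only the $\alpha$'s. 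Here $P$ can be taken general so that the transformed $\bft$ still has $\res\ne0$. Since $\GL_n$ acts transitively on $r$-dimensional subspaces, general rank-$r$ tensors correspond to general $\alpha$'s together with a general $P$, and the open-condition argument goes through.
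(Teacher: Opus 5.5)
Your proof is correct and follows the paper's argument: move $T$ to the form $(F_1,\dots,F_r,0,\dots,0)$, extract $\lambda^{(n-r)d^{n-1}}$ from the last $n-r$ entries via \Cref{lemma: pull out constants from the resultant}(1), and reduce equality to $\res(F_1,\dots,F_r,G_{r+1},\dots,G_n)\neq 0$. You go a bit further than the paper in two places: you justify that the coordinate change only rescales $\f_{\bft,T}$ by a nonzero constant, and you exhibit the witness $F_i=\ell_i^d$, using the dimension count against the $r$-dimensional cone $V(G_{r+1},\dots,G_n)$, to show that the closed condition is proper, which the paper leaves implicit.
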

\begin{proof}
Since $\rk(T)=r$, up to a linear change of coordinates we can write  
$T=(F_1,\dots,F_r,0,\dots,0)$.
Moreover we write $\bft=(G_1,\dots,G_n)$. 
Then the system defining the eigenvalues is
\[
\begin{cases}
F_1-\lambda G_1
=0\\
\vdots\\
F_r-\lambda G_r
=0\\
\lambda G_{r+1}
=0
\\
\vdots\\
\lambda G_n
=0.
\end{cases}\]
Apply \Cref{lemma: pull out constants from the resultant} $n-r$ times to conclude that $\f_{\bft,T}$ equals
\begin{align*}
&\res(F_1
-\lambda G_1
,\dots, F_r
-\lambda G_r
,\lambda G_{r+1}
,\dots,\lambda G_n
)\\
&=\lambda^{(n-r)d^{n-1}}\res(F_1-\lambda G_1,\dots, F_r-\lambda G_r, G_{r+1},\dots, G_n),
\end{align*}
so $\am_{\bft,T}(0)\ge (n-r)d^{n-1}$. The inequality is strict if and only if
$0$ is a root of  $$\res(F_1-\lambda G_1,\dots, F_r-\lambda G_r, G_{r+1},\dots, G_n),$$
that is, $\res(F_1,\dots, F_r, G_{r+1},\dots, G_n)=0$. This is a closed condition on the entries of the polynomials $F_1,\dots,F_r$, that is on the entries of $T$.
\end{proof}
\Cref{example: the eigenscheme is not equidimensional} shows that the inequality of \Cref{proposition: low rank high zero multipl} can be strict. Moreover a general tensor having rank at least $n$ does not have 0 as an eigenvalue. Indeed, we can represent such a tensor by a tuple of general homogeneous polynomials $(F_1,\dots, F_n)$. In this situation $E_{\bft,T}(0)=
\{0\}$, because the polynomials are general. For this reason, checking if 0 is an eigenvalue of a given tensor is a way to understand if it is low rank.
Using a similar argument to that of \Cref{proposition: low rank high zero multipl}, we give also a lower bound for the geometric multiplicity of $0$ and an upper bound on the geometric multiplicities of non-zero eigenvalues.
\begin{proposition}\label{proposition: geometric multiplicity of low rank tensors}\label{cor: spanmult0}
Let $T,\bft\in \C^n\ot ((\C^n)^*)^{\ot d}$ such that $\Res(\bft)\neq 0$ and $\rk(T)=r\in\{0,\dots,n\}$.
\begin{enumerate}[leftmargin=*]
\item 
$\gm_{\bft,T}(0)\ge  n-r$ and equality holds if $T$ is general among tensors of rank $r$.
\item If $\lambda_0$ is a non-zero $\bft$-eigenvalue of $T$, then  $\gm_{\bft,T}(\lambda_0)\le  r$.
\end{enumerate}
\begin{proof}
As in the proof of \Cref{proposition: low rank high zero multipl}, let  $T=(F_1,\dots,F_r,0,\dots,0)$,  $\bft=(G_1,\dots,G_n)$ and the eigenpairs are defined by
\[
\begin{cases}
F_1-\lambda G_1
=0\\
\vdots\\
F_r-\lambda G_r
=0\\
\lambda G_{r+1}
=0
\\
\vdots\\
\lambda G_n
=0.
\end{cases}\] 
Hence $E_{\bft,T}(0)$ is defined by the ideal $(F_1,\dots,F_r)$, thus $\dim E_{\bft,T}(0)\geq n-r$ and equality holds for $T$ general among the tensors of rank $r$.
If $\lambda_0$ is a non-zero $\bft$-eigenvalue of $T$, then we can divide the last $n-r$ equation by $\lambda_0$. In this way we notice that $E_{\bft,T}(\lambda_0)\subseteq V(G_{r+1},\dots,G_n)$. By \Cref{remark: complete intersection}, the ideal $(G_{r+1},\dots,G_n)$ is a complete intersection, so
\[
\gm_{\bft,T}(\lambda_0)=\dim (E_{\bft,T}(\lambda_0))\le \dim(V(G_{r+1},\dots,G_n))=r.\qedhere
\]
\end{proof}
\end{proposition}

The results we proved in this section allow to prove our conjectures for low-rank tensors. 
\begin{cor}\label{corollary: evidence for conjecture multiplicity of zero} Let $T,\bft\in \C^n\ot ((\C^n)^*)^{\ot d}$ such that $\Res(\bft)\neq 0$ and $\rk(T)=r\in\{1,\dots,n\}$.
\begin{enumerate}[leftmargin=*]
\item If $d(n-r)\ge n-1$, then $\am_{\bft,T}(0)\ge \gm_{\bft,T}(0)d^{\gm_{\bft,T}(0)-1}$.
    \item If $T$ is general among tensors of rank $r$, then $\am_{\bft,T}(0)\ge \gm_{\bft,T}(0)d^{\gm_{\bft,T}(0)-1}$.
\item 
$\am_{\bft,T}(0)\geq d^{n-1}
$.\end{enumerate}
In particular, under these assumptions the zero eigenvalue satisfies \Cref{conjecture: algebraic vs geometric multiplicity}\eqref{item: conjecture am gm weak} and \Cref{conjecture: our version of am vs sm}.
\begin{proof}
\autoref{proposition: low rank high zero multipl} implies that
\begin{align*}
\am_{\bft,T}(0) &\ge(n-r)d^{n-1}\ge(n-1)d^{n-2}\ge(n-1)d^{\gm_{\bft,T}(0)-1}\ge\gm_{\bft,T}(0)d^{\gm_{\bft,T}(0)-1}.
\end{align*}
For the second part, if $T$ is general among tensors of rank $r$, then by \autoref{proposition: geometric multiplicity of low rank tensors}
\begin{align*}
\am_{\bft,T}(0) &=(n-r)d^{n-1}\ge\gm_{\bft,T}(0)d^{n-1}\ge\gm_{\bft,T}(0)d^{n-r-1}=\gm_{\bft,T}(0)d^{\gm_{\bft,T}(0)d^{n-r}-1}.
\end{align*}
Now we consider the last statement. If $d=1$ there is nothing to prove because it follows by the theory of matrices, so we can suppose $d\geq 2$. \Cref{proposition: low rank high zero multipl} implies that
\[\am_{\bft,T}(0)\ge (n-r)d^{n-1}\geq d^{n-1}\geq 2^{n-1}\geq n\ge \lm_{\bft,T}(0). \qedhere
\]
\end{proof}\end{cor}

\appendix
\section{Some properties of \texorpdfstring{$A_{d,b,t,n}$}{Adbtn}}\label{appendix: long formula}

\begin{lemma}\label{lemma:function_A_b_n_n}
Let $d$, $b$ and $n$ 
be positive integers such that $n\ge 2$. If $t\in\{1,\dots,n\}$, then
$$A_{d,b,n,t}(k_{t+1},\dots,k_n)=k_{t+1}\cdots k_n A_{d,b,t,t}.$$
\end{lemma}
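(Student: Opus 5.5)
We prove the identity by strong induction on $t$, working directly from the recursion in \Cref{definition: F recursive}. The only subtle point is what $A_{d,b,t,t}$ means when the lower index $n$ is replaced by $t$. The recursion defining $A_{d,b,n,t}$ involves $n$ only through the number of trailing arguments, so $A_{d,b,t,t}$ is the constant produced by the same recursion with no trailing arguments. For $t=1$ this is $A_{d,b,1,1}=0$, and for $t\ge 2$ it is
\[A_{d,b,t,t}=bd^{t-2}+A_{d,b,t,t-1}(2d-b)-A_{d,b,t,t-2}(d-b,d).\]
We also use the base convention $A_{d,b,t,0}=0$, which is needed when $t=2$.

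The base cases are immediate. For $t=1$ both sides vanish, since $A_{d,b,n,1}\equiv 0$ and $A_{d,b,1,1}=0$. For $t=0$, which is needed as an auxiliary case inside the induction, both sides are also $0$.

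For the inductive step take $t\ge 2$ and assume the formula for all smaller $t'$ and all $n$. Expand $A_{d,b,n,t}(k_{t+1},\dots,k_n)$ by the definition. The first term is already $bd^{t-2}\,k_{t+1}\cdots k_n$.

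For the second term, apply the induction hypothesis with $t'=t-1$ to the argument list $(2d-b,k_{t+1},\dots,k_n)$:
\[A_{d,b,n,t-1}(2d-b,k_{t+1},\dots,k_n)=(2d-b)\,k_{t+1}\cdots k_n\,A_{d,b,t-1,t-1}.\]
For the third term, apply it with $t'=t-2$ to $(d-b,d,k_{t+1},\dots,k_n)$:
\[A_{d,b,n,t-2}(d-b,d,k_{t+1},\dots,k_n)=(d-b)\,d\,k_{t+1}\cdots k_n\,A_{d,b,t-2,t-2}.\]
Factoring out $k_{t+1}\cdots k_n$ leaves
\[bd^{t-2}+(2d-b)A_{d,b,t-1,t-1}-(d-b)d\,A_{d,b,t-2,t-2}.\]

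It remains to check that this bracket equals $A_{d,b,t,t}$. Apply the induction hypothesis once more, now with $n$ replaced by $t$, to the arguments of $A_{d,b,t,t}$:
\[A_{d,b,t,t-1}(2d-b)=(2d-b)A_{d,b,t-1,t-1},\qquad A_{d,b,t,t-2}(d-b,d)=(d-b)d\,A_{d,b,t-2,t-2}.\]
Substituting these into the recursion for $A_{d,b,t,t}$ gives exactly the bracket above, which closes the induction.

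The main obstacle is bookkeeping rather than substance. One must state clearly that $A_{d,b,n,t}$ depends on $n$ only through the arity of its arguments, so that the induction hypothesis may be applied both for the given $n$ and for $n$ replaced by $t$. One must also handle the degenerate indices $t-2\in\{0\}$ using $A_{\cdot,\cdot,\cdot,0}=0$, together with the $t=1$ base case. A byproduct of the argument is the scalar recursion
\[a_t=bd^{t-2}+(2d-b)a_{t-1}-d(d-b)a_{t-2},\qquad a_t=A_{d,b,t,t},\ a_0=a_1=0,\]
which is presumably what the appendix uses later to obtain the closed formula.
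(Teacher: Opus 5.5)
Your proposal is correct and follows essentially the same induction on $t$ as the paper. That includes the final step of applying the inductive hypothesis with $n$ replaced by $t$ to identify the bracket $bd^{t-2}+(2d-b)A_{d,b,t-1,t-1}-(d-b)d\,A_{d,b,t-2,t-2}$ with $A_{d,b,t,t}$. The only difference is cosmetic: you fold the case $t=2$ into the inductive step via the convention $A_{d,b,\cdot,0}=0$ (with $t=0$ as an auxiliary base case), whereas the paper verifies $t=2$ separately as a second base case.
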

\begin{proof}
We argue by induction on $t$. If $t=1$, then by definition $A_{d,b,n,1}=A_{d,b,1,1}=0$, so the statement holds. If $t=2$, then $A_{d,b,2,2}=b$ and
\begin{align*}
A_{d,b,n,2}(k_{3},\dots,k_n)&=bk_{3}\cdots k_n +A_{d,b,n,1}(2d-b,k_{3},\dots,k_n)+A_{d,b,n,0}(d-b,d,k_{3},\dots,k_n)\\
&=bk_3\cdots k_{n}=k_3\cdots k_{n}A_{d,b,2,2}. 
\end{align*}
Now let $t\ge 3$. 
By inductive hypothesis $A_{d,b,n,t}(k_{t+1},\dots, k_n)$ equals
\begin{align*}
b&d^{t-2}k_{t+1}\cdots k_n+A_{d,b,n,t-1}(2d-b,k_{t+1},\dots, k_n)-A_{d,b,n,t-2}(d-b,d, k_{t+1}, \dots, k_n)\\
&=bd^{t-2}k_{t+1}\cdots k_n+(2d-b)k_{t+1}\cdots k_nA_{d,b,t-1,t-1}-(d-b)d k_{t+1}\cdots k_nA_{d,b,t-2,t-2}\\
&=k_{t+1}\cdots k_n\bigl(bd^{t-2}+(2d-b)A_{d,b,t-1,t-1}-(d-b)dA_{d,b,t-2,t-2}\bigr)\\
&=k_{t+1}\cdots k_n\bigl(bd^{t-2}+A_{d,b,t,t-1}(2d-b)-A_{d,b,t,t-2}(d-b,d)\bigr)
=k_{t+1}\cdots k_nA_{d,b,t,t}.\qedhere
\end{align*}
\end{proof}

\begin{lemma}
\label{lemma:boundingDelta}
Let $b$, $d$ and $n$ be positive integers such that 
$n\ge 2$. Then $$A_{d,b,n,n}-dA_{d,b,n-1,n-1}\geq bd^{n-2}. $$
\end{lemma}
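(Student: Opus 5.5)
The plan is to turn the recursive definition of $A_{d,b,n,n}$ into a scalar linear recurrence and then control the difference $\Delta_n:=A_{d,b,n,n}-dA_{d,b,n-1,n-1}$ by a first-order recurrence. Write $a_m:=A_{d,b,m,m}$. By \Cref{definition: F recursive}, $a_1=0$ and $a_2=b+A_{d,b,2,1}(2d-b)-A_{d,b,2,0}(d-b,d)=b$. For $m\ge 3$, \Cref{lemma:function_A_b_n_n} gives $A_{d,b,m,m-1}(2d-b)=(2d-b)a_{m-1}$ and $A_{d,b,m,m-2}(d-b,d)=(d-b)d\,a_{m-2}$. Hence
\[
a_m=bd^{m-2}+(2d-b)a_{m-1}-(d-b)d\,a_{m-2}\qquad(m\ge 3).
\]

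Next we rewrite this recurrence in terms of $\Delta_m=a_m-da_{m-1}$. Subtracting $da_{m-1}$ from both sides gives
\[
\Delta_m=bd^{m-2}+(d-b)a_{m-1}-(d-b)d\,a_{m-2}=bd^{m-2}+(d-b)\Delta_{m-1}.
\]
The base case is $\Delta_2=a_2-da_1=b=bd^{0}$, so the claim holds for $n=2$. Unrolling the recurrence yields the closed form
\[
\Delta_n=b\sum_{j=0}^{n-2}d^{j}(d-b)^{n-2-j}.
\]

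The only delicate point is the sign of $d-b$. The inequality genuinely needs $b\le d$: for $n=3$ one gets $\Delta_3=b(2d-b)$, which is smaller than $bd$ when $b>d$. So I would state and use the hypothesis $b\le d$. This is harmless, because it holds in every application: $b=\deg(X)\le d$ by \Cref{remark: eigenscheme codim 1 principal ideal}, as in \Cref{corollary: algebraic multiplicity when gm=n-1}.

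Under $b\le d$, induction on $n$ gives $\Delta_{n-1}\ge 0$, since every term of the sum above is nonnegative. Then $\Delta_n=bd^{n-2}+(d-b)\Delta_{n-1}\ge bd^{n-2}$, which is the statement.
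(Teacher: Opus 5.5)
Your proof is correct and is essentially the paper's argument. Both derive $\Delta_n=bd^{n-2}+(d-b)\Delta_{n-1}$ from the scalar recurrence supplied by \Cref{lemma:function_A_b_n_n}, start from $\Delta_2=b$, and induct; the paper carries the stronger hypothesis $\Delta_{n-1}\ge bd^{n-3}$, whereas you only need $\Delta_{n-1}\ge 0$. Your remark that $b\le d$ is genuinely required is right. The paper uses it silently when it multiplies the induction hypothesis by $d-b$ and then drops $(d-b)bd^{n-3}$, and it is also implicit in \Cref{definition: F recursive}, where $d-b$ appears as an argument in $\N$.
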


\begin{proof}
We argue by induction on $n$. If $n=2$, then
\[A_{d,b,2,2}-dA_{d,b,1,1} = b-0 = bd^{2-2}\] by \Cref{definition: F recursive}. Now assume that $n\ge 3$ and that $A_{d,b,n-1,n-1}-dA_{d,b,n-2,n-2}\geq bd^{n-3}$
. By \Cref{lemma:function_A_b_n_n} and the induction hypothesis
\begin{align*}
A_{d,b,n,n}&-dA_{d,b,n-1,n-1}
=bd^{n-2}+(2d-b)A_{d,b,n-1,n-1}-(d-b)dA_{d,b,n-2,n-2}-dA_{d,b,n-1,n-1}\\
&=bd^{n-2}+(d-b)(A_{d,b,n-1,n-1}-dA_{d,b,n-2,n-2})
\geq bd^{n-2}+(d-b)bd^{n-3} \geq bd^{n-2}.\qedhere
\end{align*}
\end{proof}

\begin{lemma}
\label{lemma: Abnn is enough to beat geometric mult}
Let $b$, $d$ and $n$ be positive integers such that $n\ge 2$. Then $$A_{d,b,n,n} \geq (n-1)bd^{n-2}.$$ 
\begin{proof}
We argue by induction on $n$. If $n=2$, then $A_{d,b,2,2} =b \geq (2-1)bd^{2-2}$. If $n\ge 3$, 
then \Cref{lemma:boundingDelta} and the induction hypothesis imply that
\[
A_{d,b,n,n} \geq bd^{n-2}+dA_{d,b,n-1,n-1}\geq  bd^{n-2}+d(n-2)bd^{n-3}= (n-1)bd^{n-2}.\qedhere
\]
\end{proof}
\end{lemma}

\begin{theorem}
Let $b$, $d$ and $n$ be positive integers such that $n\ge 2$. Then
\[
A_{d,b,n,n}=\sum_{s=0}^{\lfloor \frac{n}{2}\rfloor-1}(-1)^sb(d-b )^s\sum_{j=0}^{n-2s-2}\binom{j+s}{s}d^{n-j-s-2}(2d-b)^{j}.
\]
\end{theorem}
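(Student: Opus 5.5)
Write $a_n=A_{d,b,n,n}$. By \Cref{lemma:function_A_b_n_n} and \Cref{definition: F recursive}, the sequence satisfies the linear recurrence
\[
a_n=bd^{n-2}+(2d-b)a_{n-1}-(d-b)d\,a_{n-2}\qquad(n\ge 3),
\]
with $a_1=0$ and $a_2=b$. The plan is to show that the right-hand side of the claimed formula, call it $S_n$, satisfies the same recurrence and the same initial values. The result then follows by induction on $n$. The initial check is immediate: for $n=2$ only $s=0$ and $j=0$ occur, giving $S_2=b$. For $n=1$ the outer sum is empty, so $S_1=0$. Also $S_3=bd+b(2d-b)$, which agrees with $a_3=bd+(2d-b)b$.

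To organize the verification I would use generating functions rather than manipulating double sums directly. Set $p=2d-b$ and $q=(d-b)d$. The recurrence, extended to $n\ge 2$ using $a_0=a_1=0$, gives
\[
\sum_{n\ge 2}a_nx^n=\frac{bx^2}{(1-dx)(1-px+qx^2)}.
\]
The term $bd^{n-2}$ contributes $bx^2/(1-dx)$, and dividing by $1-px+qx^2$ encodes the homogeneous part. Since $1-px+qx^2=(1-dx)(1-(d-b)x)$, one could even get a closed form by partial fractions, but the target formula suggests expanding instead as follows:
\[
\frac{1}{1-px+qx^2}=\sum_{m\ge0}(px-qx^2)^m=\sum_{j,s}\binom{j+s}{s}p^j(-q)^sx^{j+2s}.
\]

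Next multiply by $bx^2/(1-dx)=\sum_k bd^kx^{k+2}$ and extract the coefficient of $x^n$. The exponents satisfy $k+j+2s=n-2$, so $k=n-2-j-2s\ge0$. This gives
\[
a_n=\sum_{s}\sum_{j=0}^{n-2s-2}(-1)^sb\,(d-b)^sd^s\binom{j+s}{s}(2d-b)^jd^{n-2-j-2s}.
\]
The powers of $d$ combine as $d^{n-j-s-2}$, and the constraint $n-2s-2\ge0$ gives $s\le\lfloor n/2\rfloor-1$. This matches the statement exactly.

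The only delicate point is the bookkeeping. One must check that the recurrence from \Cref{lemma:function_A_b_n_n} really holds for $n\ge3$ with $A_{d,b,1,1}=0$, and that extending it with $a_0=0$ is consistent at $n=2$: there $a_2=b+p\cdot0-q\cdot0=b$. One must also get the index ranges right when extracting coefficients. I expect no conceptual obstacle. If one prefers to avoid generating functions, the same computation can be done by a direct induction using Pascal's rule $\binom{j+s}{s}=\binom{j+s-1}{s}+\binom{j+s-1}{s-1}$, but that version is messier.
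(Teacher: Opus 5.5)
Your argument is correct, and it takes a genuinely different route from the paper.

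\textbf{The paper's route.} The paper also starts from the three-term recurrence $a_n=bd^{n-2}+(2d-b)a_{n-1}-(d-b)d\,a_{n-2}$. It then proves the formula by direct induction on $n$: it substitutes the formula for $a_{n-1}$ and $a_{n-2}$, shifts indices, peels off boundary terms, and recombines coefficients via $\binom{j+s}{s}+\frac{s}{j+1}\binom{j+s}{s}=\binom{j+s+1}{s}$, which is Pascal's rule in disguise. It writes this out only for even $n$ and declares the odd case analogous.

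\textbf{Your route.} You encode the recurrence, with $a_0=a_1=0$, as the identity $(1-px+qx^2)\sum_n a_nx^n=bx^2/(1-dx)$ in $\mathbb{Z}[[x]]$. You then invert $1-px+qx^2$ as a geometric series in $px-qx^2$ and read off the coefficient of $x^n$. The bookkeeping is right: $k=n-2-j-2s\ge 0$, the total power of $d$ is $d^{n-j-s-2}$, and $s\le\lfloor n/2\rfloor-1$. This matches the statement for both parities at once, with no assumption $b\le d$. Since this is a direct derivation, the framing in your first paragraph is not needed (showing $S_n$ satisfies the recurrence and concluding by induction), and neither are the checks of $S_1$ and $S_3$. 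Only the consistency check $a_2=b$ under $a_0=a_1=0$ matters, and you did it.

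\textbf{Worth pursuing.} Your aside on the factorization $1-px+qx^2=(1-dx)(1-(d-b)x)$ gives $\sum_n a_nx^n=bx^2/\bigl((1-dx)^2(1-(d-b)x)\bigr)$. Hence you get the simpler closed form $A_{d,b,n,n}=b\sum_{i=0}^{n-2}(i+1)d^i(d-b)^{n-2-i}$. For $b\le d$ all its terms are nonnegative, so the term $i=n-2$ alone gives \Cref{lemma: Abnn is enough to beat geometric mult}. It also shows that $b\le d$ is genuinely needed there: for $d=1$, $b=3$, $n=3$ one gets $A_{d,b,3,3}=0$.

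\textbf{Comparison.} The paper's approach only buys staying within finite-sum manipulations. Yours is shorter, handles both parities uniformly, and shows where the formula comes from.
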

\begin{proof}
We argue by induction on $n$. If $n=2$, then $A_{d,b,2,2}=b+A_{d,b,2,1}(2d-b)-A_{d,b,2,0}(d-b,d)=b$, so the statement holds. If $n\ge 3$, then 
\Cref{lemma:function_A_b_n_n}  implies that
\begin{align*}
A_{d,b,n,n}&=bd^{n-2}+A_{d,b,n,n-1}(2d-b)-A_{d,b,n,n-2}(d-b,d)\\
&=bd^{n-2}+(2d-b)A_{d,b,n-1,n-1}-(d-b)dA_{d,b,n-2,n-2}.
\end{align*}
We distinguish two cases
. If $n=2p$ is even, then by the inductive hypothesis 
\begin{align*}
A_{d,b,n,n}&=bd^{2p-2}+(2d-b)\sum_{s=0}^{p-2}(-1)^sb(d-b)^s\sum_{j=0}^{2p-2s-3}\binom{j+s}{s}d^{2p-j-s-3}(2d-b)^{j}\\
&\hphantom{{}={}}-(d-b)d\sum_{s=0}^{p-2}(-1)^sb(d-b)^{s}\sum_{j=0}^{2p-2s-4}\binom{j+s}{s}d^{2p-j-s-4}(2d-b)^{j}\\
&=bd^{2p-2}+\sum_{s=0}^{p-2}(-1)^sb(d-b)^s\sum_{j=0}^{2p-2s-3}\binom{j+s}{s}d^{2p-j-s-3}(2d-b)^{j+1}\\
&\hphantom{{}={}}-\sum_{s=0}^{p-2}(-1)^sb(d-b)^{s+1}\sum_{j=0}^{2p-2s-4}\binom{j+s}{s}d^{2p-j-s-3}(2d-b)^{j}.
\end{align*}
Setting $s'=s+1$ in the second sum we obtain
\begin{align*}
A_{d,b,n,n}
&=bd^{2p-2}+\sum_{s=0}^{p-2}(-1)^sb(d-b)^s\sum_{j=0}^{2p-2s-3}\binom{j+s}{s}d^{2p-j-s-3}(2d-b)^{j+1}\\
&\hphantom{{}={}}+\sum_{s'=1}^{p-1}(-1)^{s'}b(d-b)^{s'}\sum_{j=0}^{2p-2s'-2}\binom{j+s'-1}{s'-1}d^{2p-j-s'-2}(2d-b)^{j}.
\end{align*}
Taking out the term  $s=0$ from the first sum and the term $s'=p-1$ from the second sum we obtain
\begin{align*}
A_{d,b,n,n}&=bd^{2p-2}+b\sum_{j=0}^{2p-3}d^{2p-j-3}(2d-b)^{j+1}+(-1)^{p-1}b(d-b)^{p-1}d^{p-1}\\
&\hphantom{{}={}}+\sum_{s=1}^{p-2}(-1)^sb(d-b)^s\sum_{j=0}^{2p-2s-3}\binom{j+s}{s}d^{2p-j-s-3}(2d-b)^{j+1}\\
&\hphantom{{}={}}+\sum_{s'=1}^{p-2}(-1)^{s'}b(d-b)^{s'}\sum_{j=0}^{2p-2s'-2}\binom{j+s'-1}{s'-1}d^{2p-j-s'-2}(2d-b)^{j}.
\end{align*}
Taking out also the term $j=0$ in the last sum we obtain
\begin{align*}
A_{d,b,n,n}&=bd^{2p-2}+b\sum_{j=0}^{2p-3}d^{2p-j-3}(2d-b)^{j+1}+(-1)^{p-1}b(d-b)^{p-1}d^{p-1}\\
&\hphantom{{}={}}+\sum_{s=1}^{p-2}(-1)^sb(d-b)^s\sum_{j=0}^{2p-2s-3}\binom{j+s}{s}d^{2p-j-s-3}(2d-b)^{j+1}\\
&\hphantom{{}={}}+\sum_{s'=1}^{p-2}(-1)^{s'}b(d-b)^{s'}d^{2p-s'-2}\\
&\hphantom{{}={}}+\sum_{s'=1}^{p-2}(-1)^{s'}b(d-b)^{s'}\sum_{j=1}^{2p-2s'-2}\frac{(j+s'-1)!}{(s'-1)!j!}d^{2p-j-s'-2}(2d-b)^{j}\end{align*}
Now we put together the terms
\[bd^{2p-2}+(-1)^{p-1}b(d-b)^{p-1}d^{p-1}+\sum_{s'=1}^{p-2}(-1)^{s'}b(d-b)^{s'}d^{2p-s'-2}
\]
to obtain
\begin{align*}A_{d,b,n,n}&=b\sum_{j=0}^{2p-3}d^{2p-j-3}(2d-b)^{j+1}+\sum_{s'=0}^{p-1}(-1)^{s'}b(d-b)^{s'}d^{2p-s'-2}\\
&\hphantom{{}={}}+\sum_{s=1}^{p-2}(-1)^sb(d-b)^s\sum_{j=0}^{2p-2s-3}\binom{j+s}{s}d^{2p-j-s-3}(2d-b)^{j+1}\\
&\hphantom{{}={}}+\sum_{s'=1}^{p-2}(-1)^{s'}b(d-b)^{s'}\sum_{j=1}^{2p-2s'-2}\frac{s'}{j}\binom{j+s'-1}{s'}d^{2p-j-s'-2}(2d-b)^{j}.
\end{align*}
In the first sum we set $j'=j+1$ and in the last sum we set $l=j-1$. Then
\begin{align*}
A_{d,b,n,n}&=b\sum_{j'=1}^{2p-2}d^{2p-j'-2}(2d-b)^{j'}+\sum_{s'=0}^{p-1}(-1)^{s'}b(d-b)^{s'}d^{2p-s'-2}\\
&\hphantom{{}={}}+\sum_{s=1}^{p-2}(-1)^sb(d-b)^s\sum_{j=0}^{2p-2s-3}\binom{j+s}{s}d^{2p-j-s-3}(2d-b)^{j+1}\\
&\hphantom{{}={}}+\sum_{s'=1}^{p-2}(-1)^{s'}b(d-b)^{s'}\sum_{l=0}^{2p-2s'-3}\frac{s'}{l+1}\binom{s'+l}{s'}d^{2p-l-s'-3}(2d-b)^{l+1}\\
&=b\sum_{j'=1}^{2p-2}d^{2p-j'-2}(2d-b)^{j'}+\sum_{s'=0}^{p-1}(-1)^{s'}b(d-b)^{s'}d^{2p-s'-2}\\
&\hphantom{{}={}}+\sum_{s=1}^{p-2}(-1)^sb(d-b)^s\sum_{j=0}^{2p-2s-3}\frac{s+j+1}{j+1}\binom{j+s}{s}d^{2p-j-s-3}(2d-b)^{j+1}\\
&=b\sum_{j=1}^{2p-2}d^{2p-j-2}(2d-b)^{j}+\sum_{s'=0}^{p-1}(-1)^{s'}b(d-b)^{s'}d^{2p-s'-2}\\&\hphantom{{}={}}+\sum_{s=1}^{p-2}(-1)^sb(d-b)^s\sum_{j=0}^{2p-2s-3}\binom{j+s+1}{s}d^{2p-j-s-3}(2d-b)^{j+1}\\
&=b\sum_{j=1}^{2p-2}d^{2p-j-2}(2d-b)^{j}+\sum_{s'=0}^{p-1}(-1)^{s'}b(d-b)^{s'}d^{2p-s'-2}\\
&\hphantom{{}={}}+\sum_{s=1}^{p-2}(-1)^sb(d-b)^s\sum_{j'=1}^{2p-2s-2}\binom{j'+s}{s}d^{2p-j'-s-2}(2d-b)^{j'}.
\end{align*}

By including the first sum in the last one as the term corresponding to $s=0$ we conclude that $A_{d,b,n,n}$ equals
\begin{align*}
\sum_{s'=0}^{p-1}&(-1)^{s'}b(d-b)^{s'}d^{2p-s'-2}+\sum_{s=0}^{p-2}(-1)^sb(d-b)^s\sum_{j'=1}^{2p-2s-2}\binom{j'+s}{s}d^{2p-j'-s-2}(2d-b)^{j'}\\
&=\sum_{s=0}^{p-1}(-1)^{s}b(d-b)^{s}d^{2p-s-2}+\sum_{s=0}^{p-1}(-1)^sb(d-b)^s\sum_{j'=1}^{2p-2s-2}\binom{j'+s}{s}d^{2p-j'-s-2}(2d-b)^{j'},
\end{align*}
because in the last sum the term corresponding to $s=p-1$ equals $0$. Finally, including the terms in the first summation in the last one as the terms corresponding to $j=0$, we get the desired formula
\begin{align*}
A_{d,b,n,n}&=\sum_{s=0}^{p-1}(-1)^sb(d-b)^s\sum_{j=0}^{2p-2s-2}\binom{j+s}{s}d^{2p-j-s-2}(2d-b)^{j}\\
&=\sum_{s=0}^{\frac{n}{2}-1}(-1)^sb(d-b)^s\sum_{j=0}^{n-2s-2}\binom{j+s}{s}d^{n-j-s-2}(2d-b)^{j}.
\end{align*}
The case where $n$ is odd is analogous. 
\end{proof}
\begin{corollary}
Let $d,b$ and $n$ be positive integers such that $n\ge 2$. Then
\[
A_{d,b,n,t}(k_{t+1},\dots,k_n)=k_{t+1}\cdots k_n\biggl(\sum_{s=0}^{\lfloor \frac{t}{2}\rfloor-1}(-1)^sb(d-b )^s\sum_{j=0}^{t-2s-2}\binom{j+s}{s}d^{t-j-s-2}(2d-b)^{j}\biggr)
\]
for every $t\in\{1,\dots,n\}$.
\end{corollary}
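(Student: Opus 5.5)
The statement is an immediate combination of \Cref{lemma:function_A_b_n_n} with the closed formula for $A_{d,b,n,n}$ proved in the theorem just above. I would not redo any induction on the recursion. Instead, I would reduce the general $t$ to the diagonal case $t=t$ and then substitute.

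Fix $t\in\{1,\dots,n\}$. By \Cref{lemma:function_A_b_n_n} we have
\[
A_{d,b,n,t}(k_{t+1},\dots,k_n)=k_{t+1}\cdots k_n\,A_{d,b,t,t}.
\]
This holds for every $t\in\{1,\dots,n\}$, including $t=1$, where both sides vanish.

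If $t\ge 2$, I apply the preceding theorem with $n$ replaced by $t$. This is legitimate because that theorem only requires its dimension parameter to be at least $2$. It gives $A_{d,b,t,t}$ as
\[
\sum_{s=0}^{\lfloor t/2\rfloor-1}(-1)^sb(d-b)^s\sum_{j=0}^{t-2s-2}\binom{j+s}{s}d^{t-j-s-2}(2d-b)^j .
\]
Substituting this into the identity above yields exactly the claimed formula.

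The case $t=1$ needs a separate check, since the theorem is stated only for dimension at least $2$. Here $\lfloor 1/2\rfloor-1=-1$, so the outer sum is empty and the right-hand side is $0$. This agrees with $A_{d,b,n,1}=0$ from \Cref{definition: F recursive}.

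The only point requiring care is this boundary case together with the index bookkeeping: the theorem must be applied with its $n$ equal to our $t$, and the product $k_{t+1}\cdots k_n$ must be read as $1$ when $t=n$, in which case the statement reduces to the theorem itself. Beyond this there is no real obstacle.
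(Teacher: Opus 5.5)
Your proposal is correct and is exactly the intended argument: the paper states this corollary without a separate proof, as the combination of \Cref{lemma:function_A_b_n_n} with the preceding closed formula for $A_{d,b,n,n}$ applied with $n$ replaced by $t$. Your separate check of $t=1$ via the empty outer sum is a welcome bit of care, since $A_{d,b,1,1}$ falls outside the range $n\ge 2$ of \Cref{definition: F recursive}.
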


\bibliographystyle{alpha}
\bibliography{tensor_eigenvalues_references.bib}
\end{document}